\newtheorem{thm}{Theorem}[section]
\newtheorem{cor}[thm]{Corollary}
\newtheorem{lem}[thm]{Lemma}
\newtheorem{prop}[thm]{Proposition}
\newtheorem{defn}[thm]{Definition}
\newtheorem{rem}[thm]{Remark}
\def\sph{\mathbb{S}^{d-1}}
\def\f{\frac}
\def\va{\varepsilon}
\def\Bl{\Bigl}
\def\Br{\Bigr}
 \def\a{{\alpha}}
 \def\b{{\beta}}
 \def\g{{\gamma}}
 \def\t{{\theta}}
 \def\l{{\lambda}}
 \def\d{{\delta}}
 \def\o{{\omega}}
 \def\s{{\sigma}}
 \def\la{{\langle}}
 \def\ra{{\rangle}}
 \def\CH{{\mathcal H}}
 \def\CO{{\mathcal O}}
 \def\NN{{\mathbb N}}
 \def\RR{{\mathbb R}}
  \def\SS{{\mathbb S}}
        \def\proj{\operatorname{proj}}
        \def\p{\partial}
\newcommand{\wh}{\widehat}
\def\dfrac{\displaystyle\frac}
\def\Bl{\Bigl}
\def\Br{\Bigr}
\def\f{\frac}
\def\({\Bigl(}
\def \){ \Bigr)}
\def\ga{\gamma}
\def\Ga{\Gamma}
\def\HH{\mathcal{H}}
\begin{document}

\title[Hardy-Rellich inequality and uncertainty principle]
{The  Hardy-Rellich inequality and uncertainty principle on the sphere}
\author{Feng Dai}
\address{Department of Mathematical and Statistical Sciences\\
University of Alberta\\, Edmonton, Alberta T6G 2G1, Canada.}
\email{dfeng@math.ualberta.ca}
\author{Yuan Xu}
\address{Department of Mathematics\\ University of Oregon\\
    Eugene, Oregon 97403-1222.}\email{yuan@math.uoregon.edu}
\thanks{The work of the first author was  supported  in part by NSERC  Canada under
grant RGPIN 311678-2010. The work of the second author was supported in part by NSF Grant DMS-1106113
and a grant from the Simons Foundation (\# 209057 to Y. Xu)}

\date{\today}
\keywords{Hardy-Rellich inequality, uncertainty principle, Laplace-Beltrami, spherical gradient, unit sphere}
\subjclass[2000]{42B10, 42C10, 33C45, 33C55, 43A75}

\begin{abstract}
Let $\Delta_0$ be the Laplace-Beltrami operator on the unit sphere $\sph$ of $\RR^d$. We show that the
Hardy-Rellich inequality of the form
$$
  \int_{\mathbb{S}^{d-1}} \left | f (x)\right|^2 d\sigma(x) \leq c_d  \min_{e\in\mathbb{S}^{d-1}}
    \int_{\mathbb{S}^{d-1}} (1- \langle  x, e \rangle) \left |(-\Delta_0)^{\f12}f(x) \right |^2 d\sigma(x)
$$
holds for $d =2$ and $d \ge 4$ but does not hold for $d=3$ with any finite constant, and the optimal
constant for the inequality is $c_d = 8/(d-3)^2$ for $d =2, 4, 5$ and, under additional restrictions on
the function space, for $d\ge 6$. This inequality yields an uncertainty principle
of the form
$$
   \min_{e\in\mathbb{S}^{d-1}}  \int_{\mathbb{S}^{d-1}} (1- \langle x, e \rangle) |f(x)|^2 d\sigma(x)
     \int_{\mathbb{S}^{d-1}}\left |\nabla_0 f(x)\right |^2 d\sigma(x) \ge
      c'_d
$$
on the sphere for functions with zero mean and unit norm, which can be used to establish another
uncertainty principle without zero mean assumption, both of which appear to be new.

This paper is published in Constructive Approximation, 40(2014): 141-171. An erratum is now appended.
\end{abstract}

\maketitle

\section{Introduction}
\setcounter{equation}{0}

The purpose of this paper is to establish an analogue of the  Hardy-Rellich inequality and the
uncertainty principe on the sphere $\sph: = \{ x\in \mathbb{R}^{d}: \|x\|=1\}$, where
$\|x\|$ denotes the Euclidean norm of $x\in \RR^d$. To motivate our results, we first recall
these inequalities on $\RR^d$.

Let $\Delta$ denote the usual Laplace operator on $\RR^d$. For $\a > 0$, $(-\Delta)^{\f \a2}$
denotes the fractional power of $-\Delta$. The inequality of the type
\begin{equation}\label{HardyRellich}
   \int_{\RR^d} |f(x)|^2 \|x\|^{\mu} \, dx \leq c \int_{\RR^d} \left|(-\Delta)^{\f \a2} f(x)\right|^2 \|x\|^{\mu+2\a}\, dx, \end{equation}
is called the Hardy-Rellich-type inequality. It is the classical Hardy inequality when $\a =1$,  and the
Rellich inequality when $\a = 2$. There are many papers devoted to the study of this inequality
and its various generalizations. In particular, the best constant in \eqref{HardyRellich} was calculated in
\cite{Ei, Li, Ya} under some assumptions on the parameters; see also \cite{Sa}. The uncertainty
principle is a fundamental result in quantum mechanics and it can be formulated, in the form of the
classical Heisenberg inequality, as
\begin{equation}\label{UCinequality}
     \inf_{a \in \RR^d} \int_{\RR^d} \|x - a\|^2 |f(x)|^2 dx \int_{\RR^d} |\nabla f(x)|^2 dx
          \ge \frac{d^2}{4} \left(\int_{\RR^d} | f(x)|^2 dx \right)^2.
\end{equation}
The uncertainty principle has been widely studied and extended; see, for example,
\cite{FS, Thanga} and the references therein.

Our main results in this paper are analogues of such results on the unit sphere $\sph$,
in which we work with the Laplace-Beltrami operator $\Delta_0$ and the spherical
gradient $\nabla_0$, which are the restriction of $\Delta$ and $\nabla$
on the sphere, respectively.
Let $d\s(x)$ be  the usual rotation-invariant measure on $\sph$. For smooth functions $f$
on $\sph$ that satisfy $\int_{\sph} f(x) d\s =0$, our main result on  the Hardy-Rellich inequality
states that
\begin{equation} \label{eq:HRsphere}
 \int_{\sph} \left | f (x)\right|^2 d\s(x) \leq c_d  \min_{e\in\sph}
    \int_{\sph} (1- \la  x, e \ra) |(-\Delta_0)^{\f12}f(x)|^2 d\s(x),
\end{equation}
where the constant $c_d$ satisfies $c_d \ge 8/(d-3)^2$, which shows, in particular, a surprising
result that the inequality \eqref{eq:HRsphere} holds for all dimensions but $d =3$, that is,
except for $\SS^2$.  We will also show that the best constant in the inequality is $c_d =8/(d-3)^2$
for all $f$ if $d = 2,4, 5$, and for $f$ in a subspace if $d \ge 6$. We then use the inequality
\eqref{eq:HRsphere} to establish an uncertainty principle, which states that
\begin{equation} \label{eq:UCsphere}
   \min_{e\in\sph}  \int_{\sph} (1- \la x, e \ra) |f(x)|^2 d\s \int_{\sph} |\nabla_0 f(x)|^2 d\s \ge
      c'_d  \left(\int_{\sph}|f(x)|^2 d\s \right)^2
\end{equation}
for smooth functions $f$ satisfying $\int_{\sph} f(x) d\s =0$. The proof, however, is not applicable
for $d = 3$. The gap prompted us to search for a different approach. A second proof shows
that \eqref{eq:UCsphere} does hold for $d =3$.

Recall that the geodesic distance on the sphere is defined by $d(x,y) = \arccos \la x, y\ra$, so that
$$
  1 - \la x, y \ra  = 2\sin^2 \tfrac{d(x,y)}{2},
$$
which shows that \eqref{eq:UCsphere} can be regarded as a close analogue of
\eqref{UCinequality}. Given the numerous extensions of the uncertainty principles on
a wide range of settings, it is somewhat surprising that this formulation of the uncertainty
principle has not appeared, as far as we know, in the literature. The inequality that carries
 the name of the uncertainty principle on the sphere in the literature is (\cite{NW, RV, Selig})
\begin{equation} \label{eq:UCsphere_old}
 \left(1-\|\tau(f)\|^2\right)  \int_{\sph} 
 |\nabla_0 f|^2 d\s  \ge  c \|\tau (f)\|^2
\end{equation}
for smooth functions $f$ satisfying $\|f\|_2 = 1$, where $\tau(f)$ is the vector defined by
$$
\tau (f):=\int_{\sph} x |f(x)|^2  d\s(x).
$$
The inequality \eqref{eq:UCsphere}, however, is stronger than \eqref{eq:UCsphere_old},
since it implies
\begin{equation} \label{eq:UCsphere_new}
 \left(1-\|\tau(f)\|\right) \left ( \int_{\sph} |\nabla_0 f|^2 d\s \right) \ge  c \|\tau (f)\|,
\end{equation}
and we know that $\|\tau(f)\| \le 1$ and $1- \|\tau(f)\| \le 1 -\|\tau (f)\|^2$.
Thus,  our uncertainty principle \eqref{eq:UCsphere} appears to be not
only a close analogue of the classical result on $\RR^d$, but also stronger than what
is known in the literature.

Since the zonal functions $f(\la x,\cdot \ra)$ in $L^2(\sph)$ can be identified with functions
in $L^2(w_\l,[-1,1])$ with $w_\l(t) =(1-t^2)^{\l-1/2}$ and $\l = (d-2)/2$, both the Hardy-Rellich
inequality and the uncertainty principle can be stated for functions in $L^2(w_\l,[-1,1])$ for
$\l = (d-2)/2$, where the operator $\Delta_0$ is replaced by the second order differential
operator that has the Gegenbauer polynomial as the eigenfunctions. Furthermore, these
inequalities can be formulated more generally for all $\l > -1/2$, as we shall do in most of
our statements.

The paper is organized as follows. The next section is devoted to the orthogonal expansions
in spherical harmonics, which will be our main tool. The Hardy-Rellich inequalities are discussed
and proved in Section 3, with the assumption of a technical lemma that will be proved in the
Section 5. The inequalities of uncertainty principle are established in Section 4.

\section{Spherical harmonic expansions}
\setcounter{equation}{0}

Throughout this paper, all functions  are assumed to be real valued and Lebesgue measurable on $\sph$ whenever $d\ge 3$.
Let $L^2(\sph)$ denote the space of functions of finite norm
$$
\|f \|_2 := \left( \f 1{\o_d}
 \int_{\sph} |f(x)|^2 d\s \right)^{1/2} \quad\hbox{with}\quad \o_d:=\f{2\pi^{d/2}}{\Ga(d/2)},
$$
where $\o_d$ is the surface area of the sphere $\sph$ and $d\s(x)/\o_d$ is the normalized
Lebesgue measure on $\sph$.

A spherical polynomial of degree $n$ on $\sph$ is the restriction of an algebraic polynomial
of total degree at most $n$ in $d$-variables on $\sph$. We denote by $ \Pi_n^d$ the space of
real spherical polynomials of degree at most $n$ on $\sph$. A spherical harmonic of degree
$n$ in $d$-variables is the restriction of a homogeneous harmonic polynomial of degree $n$
on $\sph$. We denote by $\CH_n^d$, $n=0,1,\cdots$, the space of spherical harmonics of
degree $n$ on $\sph$, which has dimension
\begin{equation}
 a_n^d: = \dim \CH_n^d = \dfrac{(2n+d-2)\Gamma(n+d-1)}{(n+d-2)\Gamma(n+1)\Gamma(d-1)},
  \quad  n=0,1,\cdots.
\end{equation}
These spaces are known to be mutually orthogonal with respect to the inner product of
$L^2(\sph)$. Since the space of spherical polynomials is dense in $L^2(\sph)$,  we have the
orthogonal decomposition
\begin{equation}\label{s-expansions}
L^2(\sph)=\bigoplus_{n=0}^\infty \mathcal{H}_n^d: \qquad  f=\sum_{n=0}^\infty \proj_n f,
\end{equation}
where $\proj_n$ is the orthogonal projection of $L^2(\sph)$ onto the space $\HH_n^d$.

The restriction of the Laplace operator on the the sphere is the Laplace-Beltrami operator
$\Delta_0$, which is defined by
$$
  \Delta_0 f := \Delta F \bigl|_{\sph}, \quad \hbox{where}\,\, F(x) = f\left(\f x{\|x\|}\right).
$$
For each $n =0,1,\ldots,$ the space of spherical harmonics $\HH^d_n$ is the eigenfunction-space
of $\Delta_0$ with the eigenvalue $-n(n+d-2)$, that is,
\begin{equation*}
   \HH^d_n =\left \{ f\in C^2(\SS^{d-1}): \Delta_0 f =- n (n+d-2)f\right\}, \quad n=0,1,\cdots.
\end{equation*}
For $r\in\RR\setminus\{0\}$, the fractional Laplace-Beltrami operator $(-\Delta_0)^{r}$ is
defined in a distributional  sense through $\proj_0 \left[(-\Delta_0)^r f \right]=0$ and
\begin{equation}\label{1:ch0}
   \proj_n\left[(-\Delta_0)^r f \right]= (n (n+d-2))^{r} \proj_n(f), \quad n=1, 2, \cdots.
\end{equation}
Let $\nabla$ denote the usual gradient operator of $\RR^d$. Then the tangential gradient
$\nabla_0 f$ of a function $f\in C^1(\sph)$ is defined by
$$
   \nabla_0 f =\nabla F \bigl|_{\sph}, \quad \hbox{where} \,\, F(x) = f\left(\f x{\|x\|}\right).
$$
It is known (\cite[p.80, Lemma 1]{Mu}) that, for $f, g \in C^2(\sph)$,
\begin{equation*}
   \la \Delta_0 f, g\ra_{L^2(\sph)} =-\int_{\sph} \la \nabla_0 f,  \nabla_0 g \ra d\s(x),
\end{equation*}
which, in particular,  implies, since $\Delta_0$ is self-adjoint in $L^2(\sph)$, that
\begin{equation}\label{1-6}
 \left \|(-\Delta_0)^{1/2} f \right \|_2 =  \left \| \la \nabla_0 f, \nabla_0 f \ra^{1/2} \right\|_2
       =: \|\nabla_0 f\|_2.
\end{equation}

When $d =2$, we parametrize $\SS^1$ by $x = e^{i\t}$ for $\t \in [0,2\pi)$ and identify
$f(e^{i\t})$ with $f(\t)$. Choosing $\{e^{in\t}, e^{- in\t}\}$ as a basis of $\CH^2$, the
function $f \in L^2(\SS^1)$ has the usual Fourier series
\begin{equation} \label{FourierSeries}
    f(\t) = \sum_{n=-\infty}^\infty \wh f_n e^{i n\t}, \quad \hbox{where}\quad \wh f_n
             = \frac {1}{2\pi} \int_{0}^{2\pi} f(t) e^{-i nt} dt.
\end{equation}
In this case $\proj_n f = \wh f_n e^{in\t} + \wh f_{-n} e^{- i n\t}$, $\nabla_0 = \frac{d}{d\t}$
and $\Delta_0 = \f{d^2}{d\t^2}$.

For $d >2$, we will need an explicit form of an orthonormal basis for $\CH_n^d$
parametrized by $x = (\cos \t, \sin \t \, \xi) \in  \sph$, where $\xi \in \SS^{d-2}$ and
$0 \le \t \le \pi$. This basis can be derived from the usual basis in spherical coordinates;
see, for example, \cite[p. 35]{DX}. For completeness, we give an independent derivation
below. For $\l >  -1/2$ and $n \in \NN_0$, let $C_n^\l$ denote the Gegenbauer polynomial of
degree $n$. The polynomials $C_n^\l$ satisfy the orthogonal relation  \cite[(4.7.15)]{Sz}
\begin{equation} \label{Gegen}
   c_\l \int_{-1}^1 C_n^\l(t) C_m^\l(t) (1-t^2)^{\l-1/2} dt  = h_n^\l \d_{m,n}, \quad
        h_n^\l: = \frac{ \l (2 \l)_n}{(n+\l)n!},
\end{equation}
where $(a)_n = a(a+1)\cdots (a+n-1)$ is the Pochhammer symbol and $c_\l$ is the
normalization constant $c_\l =  1 \Big /\int_{-1}^1(1-t^2)^{\l-1/2} dt =
\f{\Ga(\l+1)}{\sqrt{\pi} \Ga(\l+\f12)}$.

\begin{prop}\label{prop-2-3}
Let $\l = \f{d-2}{2}$ and $d > 2$. For $m \in \NN_0$, let $\{Y_j^m(\xi): 1 \le j \le a_m^{d-1}\}$ be
an orthonormal basis of $\CH_m^{d-1}$. For  $x = (\cos \t, \xi\sin \t )\in\sph$ with
$0\le \t \le \pi$ and $\xi \in \SS^{d-2}$, we define
$$
  P_{j,k}^n(x) = C_k^{n-k+\l}(\cos \t) (\sin\t)^{n-k} Y_j^{n-k}(\xi), \quad  1 \le j \le a_{n-k}^{d-1}, \,
   0 \le k \le n.
$$
Then $\{P_{j,k}^n: 1 \le j \le a_{n-k}^{d-1}, \, 0 \le k \le n\}$ is an orthogonal
basis of $\CH_n^d$ and
\begin{align} \label{Hkn}
 H_{k}^n : =  \frac{1}{\o_d} \int_{\sph}\left[P_{j,k}^n(x) \right]^2 d\s(x)
       = h_k^{n-k+\l}.
\end{align}
\end{prop}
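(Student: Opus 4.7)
The plan is to verify the four claims of the proposition in turn: (i) each $P_{j,k}^n$ lies in $\CH_n^d$; (ii) the family $\{P_{j,k}^n\}$ is mutually orthogonal in $L^2(\sph)$; (iii) the family has cardinality equal to $\dim\CH_n^d$; and (iv) the norm formula \eqref{Hkn} holds.

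For (i), I would first check that $P_{j,k}^n$ is the restriction to $\sph$ of a homogeneous polynomial of degree $n$ on $\RR^d$. Writing $x=(x_1,x')$ with $x_1=\cos\t$, $|x'|=\sin\t$, $\xi=x'/|x'|$, the factor $(\sin\t)^{n-k}Y_j^{n-k}(\xi)$ equals $H(x')$ on $\sph$ for some harmonic homogeneous polynomial $H$ of degree $n-k$ on $\RR^{d-1}$. Since $C_k^{n-k+\l}$ has the parity of $k$, the expression $r^k C_k^{n-k+\l}(x_1/r)$ (with $r=|x|$) is a polynomial of degree $k$ in $(x_1,r^2)$ and hence in $x\in\RR^d$, agreeing with $C_k^{n-k+\l}(\cos\t)$ on $\sph$. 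Harmonicity would then be checked via the standard decomposition
$$
\Delta_0 = \f1{\sin^{d-2}\t}\f{\p}{\p\t}\Bl(\sin^{d-2}\t\,\f{\p}{\p\t}\Br) + \f1{\sin^2\t}\Delta_0^{(\xi)},
$$
together with $\Delta_0^{(\xi)} Y_j^{n-k} = -(n-k)(n-k+d-3)Y_j^{n-k}$. Substituting $P_{j,k}^n$ and changing variables to $t=\cos\t$, the eigenvalue equation $\Delta_0 P_{j,k}^n = -n(n+d-2)P_{j,k}^n$ reduces to the classical Gegenbauer differential equation satisfied by $C_k^{n-k+\l}$.

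For (ii) and (iv), I would exploit the product splitting of surface measure $d\s(x)=(\sin\t)^{d-2}d\t\,d\s_{d-2}(\xi)$. Orthogonality of the basis $\{Y_j^m\}$ on $\SS^{d-2}$ and of the subspaces $\CH_m^{d-1}$ disposes of the cases where $(j,n-k)\ne(j',n-k')$. When $n-k=n-k'$ but $k\ne k'$, the $\t$-integral, after substituting $t=\cos\t$ and using $(\sin\t)^{2(n-k)+d-2}d\t=(1-t^2)^{(n-k)+\l-1/2}dt$, reduces to the Gegenbauer orthogonality \eqref{Gegen} with parameter $(n-k)+\l$; the same computation with $k=k'$ delivers $H_k^n=h_k^{n-k+\l}$.

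For (iii), the number of basis functions is $\sum_{k=0}^n a_{n-k}^{d-1}$, which I would show equals $a_n^d$ by an elementary induction on $n$ using the Pascal-type identity satisfied by the formula for $a_n^d$. Combined with (i) and (ii), this confirms that $\{P_{j,k}^n\}$ is an orthogonal basis of $\CH_n^d$. The main obstacle is step (i): checking harmonicity of the homogeneous extension requires either a direct Laplacian computation or the ODE reduction above, and the parity properties of $C_k^\nu$ must be tracked carefully to verify polynomiality; the remaining steps are essentially bookkeeping with Gegenbauer orthogonality and a binomial identity.
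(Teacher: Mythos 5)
Your proposal is correct, and most of it coincides with the paper's argument: the paper likewise factors the surface measure as $d\s(x)=(\sin\t)^{2\l}d\t\,d\s_{d-2}(\xi)$ (up to normalization), uses orthonormality of the $Y_j^m$ on $\SS^{d-2}$ to kill the cross terms, reads off $H_k^n$ from the Gegenbauer normalization, and invokes the dimension identity $\sum_{k=0}^n a_{n-k}^{d-1}=a_n^d$. The genuine divergence is in your step (i). The paper never verifies harmonicity at all: it only notes that $P_{j,k}^n$ agrees on $\sph$ with the polynomial $C_k^{n-k+\l}(x_1)Y_j^{n-k}(x_2,\dots,x_d)$ (homogenizable to degree $n$ exactly by the parity argument you give), and then concludes $P_{j,k}^n\in\CH_n^d$ from the soft fact that ``orthogonality determines the spherical harmonics,'' i.e.\ that on the sphere $\CH_n^d$ is the orthogonal complement of $\Pi_{n-1}^d$ inside $\Pi_n^d$, the needed orthogonality to lower degrees coming from the same $\xi$- and $\t$-integrals and the basis property from the dimension count. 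You instead verify the eigenvalue equation $\Delta_0 P_{j,k}^n=-n(n+d-2)P_{j,k}^n$ directly from the separated form of $\Delta_0$ and the Gegenbauer ODE; this does close (the bookkeeping reduces to $k\bigl(k+2(n-k)+2\l\bigr)+(n-k)(n-k+d-2)=n(n+d-2)$), but it is the computationally heavy step the paper deliberately sidesteps. Your route buys a self-contained verification that does not rely on the characterization of $\CH_n^d$ by orthogonality; the paper's buys brevity at the cost of implicitly using that characterization and the orthogonality of the whole graded family $\{P_{j,k}^m\}_{m\le n}$. Two small bookkeeping points: for fixed $n$ the case $n-k=n-k'$ with $k\ne k'$ is vacuous, so within $\CH_n^d$ the Gegenbauer orthogonality is only needed for the norm (it does matter for orthogonality against lower degrees, which the paper's route requires); and in the norm computation the $\t$-integral carries the normalization constant $c_\l$ rather than $c_{n-k+\l}$, so strictly one obtains $H_k^n=\f{c_\l}{c_{n-k+\l}}h_k^{n-k+\l}$ --- a constant shared by the paper's own computation and harmless downstream, since only $[H_k^n]^{-1/2}$ is ever used.
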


\begin{proof}
Using  the integral formula
$$
  \f 1 {\o_d} \int_{\sph} f(x) d\s_d(x) = c_\l \int_{0}^\pi  \left[\f{1}{\o_{d-1}} \int_{\SS^{d-2}}
     f(\cos \t, \xi\sin \t ) d\s_{d-1}(\xi) \right] (\sin\t)^{2\l} d\t,
$$
and the orthonormality of $Y_j^{n-k}$, we obtain that
\begin{align*}
\la P_{j,k}^n, P_{j',k'}^{n}\ra_{L^2(\sph)} & = c_\l\d_{j,j'} \d_{n-k, n-k'} \\
   & \times  \int_{0}^{\pi} C_k^{n-k+\l}(\cos \t)C_{k'}^{n-k'+\l}(\cos \t) (\sin\t)^{2n-k-k'+2\l} d\t,
\end{align*}
from which the mutual orthogonality of  $P_{j,k}^n$ follows, so is the formula of $H_k^n$.

Since each $Y_j^{n-k}$ is the restriction to $\SS^{d-2}$ of a homogeneous polynomial
in $d-1$ variables of degree $n-k$, it follows readily that, for $x=(x_1,\cdots, x_d)\in\sph$,
$$
  P_{j,k}^n (x) = C_k^{n-k+\l}(x_1) Y_{j}^{n-k}(x_2, \cdots, x_d),
$$
which shows that  $P_{j,k}^n$ is a homogeneous polynomial. Furthermore, it
is easy to verify that $ \sum_{k=0}^n a_{n-k}^{d-1} = a_n^d 
= \text{dim}\, \CH_n^d$. Since the orthogonality determines the spherical harmonics, $\{P_{j,k}^n:
1 \le j \le a_{n-k}^{d-1}, \, 0 \le k \le n\}$ is a basis of $\CH_n^d$.
\end{proof}

\begin{defn}
For $d > 2$, we define the Fourier coefficients of  $f\in L^2(\sph)$ with respect to the mutually
orthogonal basis $\{P_{j,k}^n(x)\}$ by
\begin{equation}\label{2-3}
    \wh f_{j,k}^n : = [H_{k}^n]^{-1/2} \f 1 {\o_d} \int_{\sph} f(y) P_{j,k}^n(y) d\s(y),\   \  0\leq k\leq n,\   \ 1\leq j\leq a_{n-k}^{d-1}.
\end{equation}
\end{defn}

As a direct consequence of Proposition \ref{prop-2-3}, the projection operator can be
expressed as the following:

\begin{lem}\label{cor-2-3}
For each $f\in L^2(\sph)$, $d > 2$, and  $n\in\NN_0$,
\begin{equation}\label{2-4}
   \proj_n f(x) = \sum_{k=0}^n \sum_{j=1}^{a_{n-k}^{d-1}}\wh f_{j,k}^n  [H_{k}^n]^{-1/2} P_{j,k}^n(x),\end{equation}
   and \begin{equation}\label{2-5}
 \|\proj_n f\|_2^2  = \sum_{k=0}^n   \sum_{1 \le j \le a_{n-k}^{d-1}} \left| \wh f_{j,k}^n \right |^2.
\end{equation}
\end{lem}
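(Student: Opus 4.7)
The plan is to observe that this lemma is essentially a restatement of the abstract fact that the orthogonal projection onto a finite-dimensional subspace equals the partial Fourier sum in any orthonormal basis of that subspace, combined with Parseval's identity. Proposition \ref{prop-2-3} has already done the real work by exhibiting $\{P_{j,k}^n : 0\le k\le n,\ 1\le j\le a_{n-k}^{d-1}\}$ as a mutually orthogonal basis of $\CH_n^d$ with squared norms $H_k^n$; nothing beyond unpacking definitions remains.

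First I would normalize the basis. Set
\[
  Q_{j,k}^n := [H_k^n]^{-1/2} P_{j,k}^n,
\]
so that by \eqref{Hkn} the family $\{Q_{j,k}^n\}$ is an orthonormal basis of $\CH_n^d$ with respect to the normalized inner product $\la g,h\ra = \f1{\o_d}\int_{\sph} g h\, d\s$. The Fourier coefficient defined in \eqref{2-3} can then be rewritten as
\[
  \wh f_{j,k}^n = \la f, Q_{j,k}^n\ra.
\]

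Next I would invoke the characterization of $\proj_n$ as the orthogonal projection of $L^2(\sph)$ onto $\CH_n^d$, which coincides with the expansion in any orthonormal basis of $\CH_n^d$:
\[
  \proj_n f = \sum_{k=0}^n \sum_{j=1}^{a_{n-k}^{d-1}} \la f, Q_{j,k}^n\ra\, Q_{j,k}^n
            = \sum_{k=0}^n \sum_{j=1}^{a_{n-k}^{d-1}} \wh f_{j,k}^n\, [H_k^n]^{-1/2} P_{j,k}^n(x),
\]
which is \eqref{2-4}. The norm identity \eqref{2-5} then follows from Parseval applied inside the finite-dimensional Hilbert space $\CH_n^d$: since the $Q_{j,k}^n$ are orthonormal,
\[
  \|\proj_n f\|_2^2 = \sum_{k=0}^n \sum_{j=1}^{a_{n-k}^{d-1}} |\la f, Q_{j,k}^n\ra|^2
                     = \sum_{k=0}^n \sum_{j=1}^{a_{n-k}^{d-1}} |\wh f_{j,k}^n|^2.
\]

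There is no real obstacle here; the only point to be careful about is bookkeeping with the normalization constant $H_k^n$, ensuring the factor $[H_k^n]^{-1/2}$ in \eqref{2-3} and in \eqref{2-4} combine correctly so that $\wh f_{j,k}^n$ are genuine orthonormal coefficients rather than unnormalized ones. Once that is matched, both formulas are immediate from Proposition \ref{prop-2-3} together with the standard Hilbert space projection formula.
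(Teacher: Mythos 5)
Your proof is correct and follows exactly the route the paper intends: the paper offers no written proof, stating only that the lemma is "a direct consequence of Proposition \ref{prop-2-3}," and your normalization $Q_{j,k}^n=[H_k^n]^{-1/2}P_{j,k}^n$ together with the standard projection formula and Parseval's identity is precisely that argument, with the factor $[H_k^n]^{-1/2}$ in \eqref{2-3} accounted for correctly.
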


The reason for our choice of the particular basis in Proposition \ref{prop-2-3} lies in the following
result.

\begin{lem} \label{lem-3-2}
Let $d > 2$. If $f\in L^2(\sph)$ and $\int_{\sph} f(x)\, d\s(x)=0$, then
\begin{align}
 \f1 {\o_d} \int_{\sph} x_1 |f(x)|^2 d\s(x)     & =    \sum_{n=1}^\infty
    \sum_{k=0}^n \g_k^n \sum_{1 \le j \le a_{n-k}^{d-1}} \wh f_{j,k}^n\wh f_{j,k+1}^{n+1},\label{3-4}
\end{align}
where \begin{equation}\label{3-5}
  \g_k^n:=  \sqrt{\frac{(2n-k+2\l)(k+1)}{(n+\l)(n+\l+1)}}.
\end{equation}
\end{lem}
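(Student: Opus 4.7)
The plan is to expand $f$ in the mutually orthogonal basis $\{P_{j,k}^n\}$ of Proposition~\ref{prop-2-3}, so that $x_1 |f|^2$ becomes a double sum of the products $x_1 P_{j,k}^n P_{j',k'}^{n'}$ whose integrals I can read off term by term. The crucial observation, already used in the proof of Proposition~\ref{prop-2-3}, is the algebraic identity
$$
   P_{j,k}^n(x) = C_k^{n-k+\l}(x_1)\, Y_j^{n-k}(x_2,\dots,x_d),
$$
so multiplication by $x_1$ only acts on the Gegenbauer factor. Applying the standard three-term recurrence
$$
   x\, C_k^{\mu}(x) = \frac{k+1}{2(k+\mu)}\, C_{k+1}^{\mu}(x) + \frac{k+2\mu-1}{2(k+\mu)}\, C_{k-1}^{\mu}(x)
$$
with $\mu = n-k+\l$, and noting that the shifted Gegenbauer factors $C_{k\pm 1}^{n-k+\l}$ combined with the unchanged $Y_j^{n-k}$ are precisely $P_{j,k\pm 1}^{n\pm 1}$ (because the upper index $n-k+\l$ is preserved), I obtain the key recursion
$$
  x_1 P_{j,k}^n(x) = \frac{k+1}{2(n+\l)}\, P_{j,k+1}^{n+1}(x) + \frac{2n-k+2\l-1}{2(n+\l)}\, P_{j,k-1}^{n-1}(x).
$$

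By the orthogonality in \eqref{Hkn}, the integral $\f1{\o_d}\int_{\sph} x_1 P_{j,k}^n P_{j',k'}^{n'}\, d\s$ is therefore nonzero only when $j'=j$ and $(n',k')=(n+1,k+1)$ or $(n-1,k-1)$, with values $\frac{k+1}{2(n+\l)} H_{k+1}^{n+1}$ and $\frac{2n-k+2\l-1}{2(n+\l)} H_{k-1}^{n-1}$, respectively. Substituting the $L^2$-expansion $f = \sum_{n,k,j} [H_k^n]^{-1/2} \wh f_{j,k}^n\, P_{j,k}^n$ into $\f1{\o_d}\int x_1 |f|^2\, d\s$ splits the answer into two single series, one indexed by the ``rising'' pairs and one by the ``falling'' pairs. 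Since both $H_{k+1}^{n+1}$ and $H_{k-1}^{n-1}$ share the upper Gegenbauer index $n-k+\l$ with $H_k^n$, the ratios $H_{k+1}^{n+1}/H_k^n$ and $H_{k-1}^{n-1}/H_k^n$ can be reduced to an elementary Pochhammer manipulation from the formula $H_k^n = h_k^{n-k+\l}$ in \eqref{Hkn}.

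A direct calculation then shows that the rising coefficient $\frac{k+1}{2(n+\l)}\sqrt{H_{k+1}^{n+1}/H_k^n}$ collapses to $\g_k^n/2$; reindexing the falling sum by $n\mapsto n+1$, $k\mapsto k+1$, the corresponding coefficient $\frac{2n-k+2\l-1}{2(n+\l)}\sqrt{H_{k-1}^{n-1}/H_k^n}$ likewise collapses to $\g_k^n/2$, so the two pieces add up to the stated coefficient $\g_k^n$. The zero-mean hypothesis enters only to kill the unique $n=0$ contribution $\g_0^0\, \wh f_{1,0}^0\, \wh f_{1,1}^1$, allowing the sum to start at $n=1$. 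The main obstacle is this last algebraic verification, i.e.\ checking that both normalization-adjusted coefficients really do collapse to the same expression $\g_k^n/2$; convergence is not an issue because $\g_k^n$ is uniformly bounded and $\sum_{n,k,j}|\wh f_{j,k}^n|^2 = \|f\|_2^2<\infty$, so absolute convergence of the double series, and the term-by-term integration used in the calculation, both follow from Cauchy--Schwarz.
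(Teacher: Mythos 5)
Your proposal is correct and follows essentially the same route as the paper: both rest on the three-term recurrence giving $x_1 P_{j,k}^n = A_k^n P_{j,k+1}^{n+1} + B_k^n P_{j,k-1}^{n-1}$, orthogonality of the basis, and the normalization computation showing the adjusted coefficients collapse to $\g_k^n$. The only cosmetic difference is that the paper organizes the sum via the projections $\proj_n f$ and obtains the factor $2$ from the symmetry $\int x_1 \proj_n f\, \proj_{n+1} f\, d\s$, whereas you expand fully in the basis and reindex the ``falling'' series — the two halves you add are exactly the paper's factor of $2$.
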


\begin{proof}
Firstly, we note that, by
the three term relation of the Gegenbauer polynomials (see \cite[p.81, (4.7.17)]{Sz}), for $x=(\cos\t, \xi\sin\t)$ with  $\xi\in\SS^{d-2}$ and $\t\in [0,\pi]$,
\begin{align*}
   x_1 P_{j,k}^n(x)& = \left[A_k^n C_{k+1}^{n-k+\l}(\cos \t)+ B_k^nC_{k-1}^{n-k+\l}(\cos \t) \right] (\sin\t)^{n-k} Y_j^{n-k}(\xi) \\
   & = A_k^n P_{j,k+1}^{n+1}(x) + B_k^n P_{j,k-1}^{n-1}(x),
\end{align*}
where   the coefficients are given by
$$
  A_k^n: = \frac{k+1}{2(n+\l)} \quad \hbox{and}\quad B_k^n: = \frac{2n-k+2\l-1}{2(n+\l)},
$$
and we assume  that $P_{j,-1}^{n-1}(x)=0$.
In particular, this implies
$$
  x_1 \proj_n f(x) = \sum_{k=0}^n \sum_{1 \le j \le a_{n-k}^{d-1}} [H_{k}^n]^{-\f12}\wh f_{j,k}^n
      \left[A_k^n P_{j,k+1}^{n+1}(x) + B_k^n P_{j,k-1}^{n-1}(x) \right].
$$
Consequently, by the orthogonality of $P_{j,k}^n$, it follows that
$$
   \int_{\sph} x_1 \proj_n f(x) \proj_m f(x) d\s = 0, \quad \hbox{unless $|m-n|=1$},
$$
and that
\begin{align*}
  &\f1 {\o_d} \int_{\sph} x_1 \proj_n f(x) \proj_{n+1} f(x) d\s\\
  & =
    \sum_{k=0}^n A_k^n [H_{k+1}^{n+1}]^{-\f12} [H_k^n]^{-\f12} \sum_{1 \le j \le a_{n-k}^{d-1}} \wh f_{j,k}^n\wh f_{j,k+1}^{n+1}.
\end{align*}
Consequently, we obtain that
\begin{align*}
 \f1 {\o_d} \int_{\sph} x_1 |f(x)|^2 d\s  & = 2  \sum_{n=1}^\infty
     \f1 {\o_d} \int_{\sph} x_1  \proj_n f(x)  \proj_{n+1} f(x) d\s(x)\\
      & =    \sum_{n=1}^\infty
    \sum_{k=0}^n \g_k^n \sum_{1 \le j \le a_{n-k}^{d-1}} \wh f_{j,k}^n\wh f_{j,k+1}^{n+1},
\end{align*}
where the first step uses the assumption that  $\proj_0 f(x)  = \f 1 {\o_d} \int_{\sph}f(x) d\s =0$. This completes the proof.
\end{proof}

A zonal function on the sphere is a function that depends only on $\la x, y\ra$, that is, a function
of the form $f_0(\la x,y\ra)$.
It is well known that the reproducing kernel $P_n(\cdot,\cdot)$ of $\CH_n^d$ in $L^2(\sph)$ is given by
a zonal polynomial
$$
   Z_n(x, y) = \frac{n+\l}{\l} C_n^\l(\la x, y\ra), \quad \l = \f{d-2}{2},
$$
which is the integral kernel of $\proj_n f$, that is,
$$
      \proj_n f(x)  = \f{1}{\o_d} \int_{\sph} f(y) Z_n(x,y) d\s (y), \quad \forall x \in \sph.
$$
For a function $f$ defined on $[-1,1]$, it is well known that the spherical harmonic expansion of a
zonal function $x \mapsto f(\la x,y \ra)$ agrees with the Gegenbauer expansion of $f$ in
$C_n^\l$ with $\l = \f{d-2}{2}$.

The connection to the Gegenbauer expansions holds for general parameters of $\l$. For
$f \in L^2(w_\l,[-1,1])$ with $w_\l(t) = (1-t^2)^{\l-1/2}$, the Gegenbauer expansion of $f$ is
given by
$$
    f(t) = \sum_{n=0}^\infty \wh f_{n}^\l (h_n^\l)^{-\frac12}C_n^\l(t), \qquad
       \wh f_n^\l :=  (h_n^\l)^{-\frac12}c_\l \int_{-1}^1 f(s) C_n^\l(s) w_\l(s) ds,
$$
where $c_\l$ denotes the normalization constant of $w_\l$, which follows from the fact
that $(h_n^\l)^{-\frac12}C_n^\l(t)$ is orthonormal and the identity holds in the $L^2$ sense.
As in the proof of Lemma \ref{lem-3-2}, we can deduce from the three-term relation of
the Gegenbauer polynomials the following result:

\begin{prop}\label{prop:s-GegenSeries}
For $\l >  -1/2$ and $f \in L^2(w_\l,[-1,1])$,
\begin{equation}\label{eq:s-GegenSeries}
   c_\l \int_{-1}^1 s |f(s)|^2 w_\l(s) ds = \sum_{n=1}^\infty \g_n^n \wh f_n^\l \wh f_{n+1}^\l.
\end{equation}
\end{prop}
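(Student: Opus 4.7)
The plan is to imitate the proof of \lemref{lem-3-2} in the one-dimensional Gegenbauer setting, where the role of \propref{prop-2-3} is replaced directly by the three-term recurrence for $C_n^\l$. Specifically, \cite[p.~81, (4.7.17)]{Sz} gives
$$
   t C_n^\l(t) = A_n^\l C_{n+1}^\l(t) + B_n^\l C_{n-1}^\l(t), \quad A_n^\l = \f{n+1}{2(n+\l)}, \quad B_n^\l = \f{n+2\l-1}{2(n+\l)},
$$
with the convention $C_{-1}^\l \equiv 0$; this is the one-dimensional analogue of the identity $x_1 P_{j,k}^n = A_k^n P_{j,k+1}^{n+1} + B_k^n P_{j,k-1}^{n-1}$ used there.

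First I would substitute the Gegenbauer expansion $f(s) = \sum_{n\ge 0} \wh f_n^\l (h_n^\l)^{-1/2} C_n^\l(s)$ into $|f|^2$ and use the recurrence together with the orthogonality relation \eqref{Gegen} to evaluate $c_\l \int_{-1}^1 s\, C_n^\l C_m^\l w_\l\,ds$. By orthogonality this integral vanishes unless $|n-m|=1$; it equals $A_n^\l h_{n+1}^\l$ when $m=n+1$ and $B_n^\l h_{n-1}^\l$ when $m=n-1$. After reindexing the second contribution by $n\mapsto n+1$, both cross terms are indexed by consecutive pairs, and the coefficient of $\wh f_n^\l \wh f_{n+1}^\l$ becomes $A_n^\l (h_{n+1}^\l/h_n^\l)^{1/2} + B_{n+1}^\l (h_n^\l/h_{n+1}^\l)^{1/2}$.

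Next I would invoke the symmetry $A_n^\l h_{n+1}^\l = B_{n+1}^\l h_n^\l$, which follows either from self-adjointness of multiplication by $s$ in the orthonormal basis $\{(h_n^\l)^{-1/2} C_n^\l\}$ or directly from $h_n^\l = \l(2\l)_n/((n+\l)n!)$. This collapses the two terms to $2 A_n^\l (h_{n+1}^\l/h_n^\l)^{1/2}$.

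Finally, using $h_{n+1}^\l/h_n^\l = (n+2\l)(n+\l)/((n+1)(n+\l+1))$, a routine simplification yields
$$
   2 A_n^\l (h_{n+1}^\l/h_n^\l)^{1/2} = \sqrt{\f{(n+1)(n+2\l)}{(n+\l)(n+\l+1)}},
$$
which is exactly $\g_n^n$ as given in \eqref{3-5}. The main (very modest) obstacle is simply the bookkeeping of the normalization factors $(h_n^\l)^{-\f12}$ and of the two symmetric cross-term contributions, so that the final coefficient simplifies cleanly to $\g_n^n$ rather than to an asymmetric combination of $A_n^\l$ and $B_{n+1}^\l$.
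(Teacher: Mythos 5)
Your proof is correct and is exactly the argument the paper intends: the paper gives no separate proof of this proposition, merely remarking that it follows ``as in the proof of Lemma \ref{lem-3-2}'' from the three-term relation, and that is precisely the computation you carry out (your coefficients $A_n^\l$, $B_n^\l$, the symmetry $A_n^\l h_{n+1}^\l = B_{n+1}^\l h_n^\l$, and the simplification of $2A_n^\l (h_{n+1}^\l/h_n^\l)^{1/2}$ to $\g_n^n$ all check out against \eqref{Gegen} and \eqref{3-5}). The only caveat concerns the statement rather than your argument: your derivation produces a cross term for every $n\ge 0$, so the displayed sum should start at $n=0$ unless $\wh f_0^\l=0$, which is the setting in which the paper actually applies the identity.
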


For $\l = 0$, the Gegenbauer polynomials become the Chebyshev polynomials of the first
kind, or the cosine functions upon setting $t = \cos \t$, which correspond to the zonal
functions in the case of $\SS^1$. For the Fourier series in \eqref{FourierSeries}, we have
\begin{equation}\label{eq:s-FourierSeries}
  \frac{1}{2\pi} \int_{0}^{2 \pi} (\cos \t)  |f(\t)|^2 d\t  = \sum_{n=-\infty}^\infty \wh f_n \wh f_{n+1},
\end{equation}
which can be easily verified upon using $\cos \t = (e^{i\t} + e^{-i \t})/2$.

\section{The  Hardy-Rellich-type inequality}
\setcounter{equation}{0}

Let us start with the simple case of $\SS^1$, the proof of which nevertheless indicates what
is needed in the higher dimension. What we need is an inequality that can be deduced
from the classical Hardy inequality. The Hardy inequality (cf. \cite[p. 239, (9.8.1)]{HLP}) states
that for $1<p<\infty$ and any sequence of real numbers $b_n$,
\begin{equation} \label{eq:Hardy}
   \sum_{n=1}^\infty  \left( \frac 1 n \sum_{k=1}^n b_k \right)^p \le \left (\f p {p-1} \right)^p
     \sum_{n=1}^\infty |b_n|^p.
\end{equation}

\begin{lem}\label{lem-3-1}
If $\{ a_k\}_{k=1}^\infty$ is a sequence  of real numbers, then
\begin{equation}\label{3-6}
\sum_{n=1}^\infty |a_n a_{n+1}|\leq \sum_{n=1}^\infty \left(1-\f 1{8n^2}\right) a_n^2.
\end{equation}
\end{lem}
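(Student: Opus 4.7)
The natural move is weighted AM--GM on each cross term $|a_n a_{n+1}|$, and the weights must be chosen so that the resulting coefficient of $a_n^2$ is at most $1-\tfrac{1}{8n^2}$. The factor $\tfrac{1}{8}$ in the target (rather than e.g.\ $\tfrac14$) is a strong hint that the right weights are $\sqrt{n}$: if $b_n=\sqrt{n}$, then
$$
\frac{b_{n+1}+b_{n-1}}{2 b_n}=\frac{\sqrt{n+1}+\sqrt{n-1}}{2\sqrt{n}}=\frac{1}{2}\sqrt{1+\tfrac{1}{n}}+\frac{1}{2}\sqrt{1-\tfrac{1}{n}},
$$
which, by the Taylor expansion of $\sqrt{1\pm x}$, is exactly $1-\tfrac{1}{8n^2}+O(n^{-4})$. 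So the plan is to exploit this.

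First I would write the weighted AM--GM step
$$
|a_n a_{n+1}|\le \frac{1}{2}\Bigl(\sqrt{\tfrac{n+1}{n}}\,a_n^2+\sqrt{\tfrac{n}{n+1}}\,a_{n+1}^2\Bigr),\qquad n\ge 1,
$$
(assuming without loss of generality that the right side of \eqref{3-6} is finite, so that every sum below makes sense). Summing over $n\ge 1$ and reindexing the second piece via $m=n+1$, the $a_n^2$ terms group to give
$$
\sum_{n=1}^\infty |a_n a_{n+1}|\le \frac{\sqrt{2}}{2}a_1^2+\sum_{n=2}^\infty \frac{\sqrt{n+1}+\sqrt{n-1}}{2\sqrt{n}}\,a_n^2.
$$

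The remaining task is the elementary bound
$$
\frac{\sqrt{n+1}+\sqrt{n-1}}{2\sqrt{n}}\le 1-\frac{1}{8n^2},\qquad n\ge 2,
$$
together with the case $n=1$, where I just need $\tfrac{\sqrt{2}}{2}\le 1-\tfrac{1}{8}=\tfrac{7}{8}$, which is obvious. For the general case I would square: $\bigl(\sqrt{n+1}+\sqrt{n-1}\bigr)^2=2n+2\sqrt{n^2-1}$, so the inequality is equivalent to
$$
2n+2\sqrt{n^2-1}\le 4n\Bigl(1-\tfrac{1}{8n^2}\Bigr)^2,
$$
and after dividing by $2$ it reduces to showing $\sqrt{1-\tfrac{1}{n^2}}\le 1-\tfrac{1}{2n^2}-\tfrac{1}{8n^4}\cdot c$ with a manageable remainder, which in turn follows directly from the pointwise inequality $\sqrt{1-u}\le 1-\tfrac{u}{2}-\tfrac{u^2}{8}$ for $0\le u\le 1$ (all coefficients in the binomial expansion of $\sqrt{1-u}$ after the constant are negative).

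The main (and only) obstacle is therefore this elementary two-line calculus check: everything else is bookkeeping. The choice of the weights $\sqrt{n}$ is the single conceptual step, and it is essentially forced by matching the constant $1/8$ on the right-hand side.
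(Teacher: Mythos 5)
Your proof is correct, but it takes a genuinely different route from the paper. The paper deduces \eqref{3-6} from the classical discrete Hardy inequality \eqref{eq:Hardy} with $p=2$: setting $a_0=0$ and $b_n=a_n-a_{n-1}$, Hardy's inequality becomes $\sum n^{-2}a_n^2\le 4\sum(a_n-a_{n-1})^2$, and expanding $(a_n-a_{n-1})^2=a_n^2+a_{n-1}^2-2a_na_{n-1}$ and rearranging gives exactly \eqref{3-6}; the constant $1/8$ is thus traced directly to the sharp constant $4$ in Hardy's inequality. You instead run a self-contained weighted AM--GM with weights $\sqrt{n}$, which reduces everything to the scalar bound $\frac{\sqrt{n+1}+\sqrt{n-1}}{2\sqrt{n}}\le 1-\frac{1}{8n^2}$; after squaring (legitimate, both sides being positive) this follows from $\sqrt{1-v}\le 1-v/2$ alone, since the required right-hand side is $1-\frac{1}{2n^2}+\frac{1}{32n^4}$ --- note the sign of the last term, which is $+$ rather than the $-\frac{1}{8n^4}\cdot c$ you wrote, so you are in fact proving something slightly stronger than what is needed, and the argument closes. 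What each approach buys: the paper's derivation is two lines once Hardy's inequality is quoted from Hardy--Littlewood--P\'olya, and it explains conceptually where $1/8$ comes from; yours needs no external citation, is fully elementary, and yields the marginally sharper coefficient $\frac{\sqrt{n+1}+\sqrt{n-1}}{2\sqrt{n}}=1-\frac{1}{8n^2}-O(n^{-4})$. The two are close cousins --- your weight $\sqrt{n}$ is exactly the extremal sequence for the $p=2$ Hardy inequality, so the same mechanism is at work in disguise --- but the execution is independent and both are valid.
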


\begin{proof}
Without loss of generality, we may assume that  $a_n\ge 0$ for all $n\in\NN$, and that $\sum_{n=1}^\infty a_n^2<\infty$.
Setting $a_0=0$ and $b_n =a_n-a_{n-1}$ for $n\ge 1$, we can rewrite \eqref{eq:Hardy}
in the following equivalent form:
\begin{equation*}
   \sum_{n=1}^\infty n^{-p} a_n^p \leq \Bigl(\f p{p-1}\Bigr)^p \sum_{n=1}^\infty (a_n-a_{n-1})^p,
\end{equation*}
which, upon setting $p=2$ and using $(a_n-a_{n-1})^2=a_n^2+a_{n-1}^2 -2a_n a_{n-1}$, can
be rearranged to give the desired inequality \eqref{3-6}.
\end{proof}

Recall that for $f$ defined on $\SS^1$, we identify $f(e^{i\t})$ with $f(\t)$ for $\t \in [0, 2\pi)$.
The Hardy-Relich inequality on $\SS^1$ takes the following form:

\begin{thm} \label{thm:HRineq-d=2}
Let $f \in L^2(\SS^1)$ satisfy $f' \in L^2(\SS^1)$ and $\int_{0}^{2\pi}  f(\t) d\t =0$. Then
\begin{equation}\label{eq:HRineq-d=2}
     \int_{0}^{2\pi} (1- \cos \t) |f'(\t)|^2 d\t  \ge  \frac{1}{8} \int_{0}^{2\pi} |f(\t)|^2 d\t.
\end{equation}
Furthermore, the constant $1/8$ is sharp.
\end{thm}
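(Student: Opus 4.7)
The plan is to expand $f$ in Fourier series and reduce \eqref{eq:HRineq-d=2} to the discrete inequality of Lemma~\ref{lem-3-1}. Write $f(\t)=\sum_{n\ne 0}c_n e^{in\t}$; the zero-mean hypothesis removes the $n=0$ term, and reality of $f$ forces $c_{-n}=\overline{c_n}$. Parseval gives $\frac{1}{2\pi}\int_0^{2\pi}|f|^2\,d\t=2\sum_{n\ge 1}|c_n|^2$ and $\frac{1}{2\pi}\int_0^{2\pi}|f'|^2\,d\t=2\sum_{n\ge 1}n^2|c_n|^2$. Applying the identity \eqref{eq:s-FourierSeries} to $g:=f'$, whose Fourier coefficients are $\widehat{g}_n=in c_n$, and then symmetrizing over $n$ and $-n-1$ via $c_{-n}=\overline{c_n}$, yields
\[
\frac{1}{2\pi}\int_0^{2\pi}(\cos\t)|f'(\t)|^2\,d\t=-2\sum_{n\ge 1}n(n+1)\mathrm{Re}(c_nc_{n+1}).
\]
Hence \eqref{eq:HRineq-d=2} is equivalent to the purely algebraic statement
\[
\sum_{n\ge 1}\Bigl(n^2-\tfrac18\Bigr)|c_n|^2+\sum_{n\ge 1}n(n+1)\mathrm{Re}(c_nc_{n+1})\ge 0.
\]

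Since $\mathrm{Re}(c_nc_{n+1})\ge-|c_n||c_{n+1}|$, it suffices to prove
\[
\sum_{n\ge 1}n(n+1)|c_n||c_{n+1}|\le\sum_{n\ge 1}\Bigl(n^2-\tfrac18\Bigr)|c_n|^2.
\]
Setting $a_n:=n|c_n|$ converts the left-hand side into $\sum_{n\ge 1}a_na_{n+1}$ and the right-hand side into $\sum_{n\ge 1}(1-\tfrac{1}{8n^2})a_n^2$, so the estimate is precisely Lemma~\ref{lem-3-1}. This closes the main inequality.

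For sharpness of $1/8$ I would track the two sources of slack in the chain above. The bound $\mathrm{Re}(c_nc_{n+1})\ge-|c_n||c_{n+1}|$ is saturated by any sequence of the form $c_n=(-1)^n r_n$ with $r_n\ge 0$, since then $c_nc_{n+1}=-r_nr_{n+1}$. Lemma~\ref{lem-3-1} is itself driven by the discrete Hardy inequality $\sum(a_n/n)^2\le 4\sum(a_n-a_{n-1})^2$, whose constant $4$ is sharp though not attained; a standard near-extremizer is a smoothed truncation of $a_n=\sqrt{n}$ at a large scale $N$, corresponding to $r_n\sim n^{-1/2}$ on the chosen window. The main technical point I expect is to arrange the cutoff so its contribution to $\sum(a_n-a_{n-1})^2$ is $o(\log N)$ and therefore negligible compared with the logarithmic main term $\tfrac14\log N$; granted this routine regularization, the ratio of the two sides of \eqref{eq:HRineq-d=2} tends to $8$ as $N\to\infty$, showing $1/8$ is optimal.
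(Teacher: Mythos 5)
Your proof of the inequality itself is correct and is essentially the paper's argument: expand in Fourier series, discard the sign of the $\cos\t$ cross term, and apply Lemma~\ref{lem-3-1} to $a_n=n|c_n|$. The only difference is bookkeeping: the paper first proves $\frac{1}{2\pi}\int_0^{2\pi}(1-\cos\t)|h(\t)|^2\,d\t\ge\frac18\sum_{n\ne0}n^{-2}|\wh h_n|^2$ for an arbitrary zero-mean $h$ and then sets $h=f'$, whereas you substitute $f'$ first; the two computations coincide.

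There is, however, a sign error in your cross-term identity which is harmless for the inequality (you immediately pass to absolute values) but would derail the sharpness argument if followed literally. The identity \eqref{eq:s-FourierSeries} is written for the real coefficients of a real function; applied to $g=f'$, whose coefficients $\wh g_n=inc_n$ are purely imaginary, the product must be read as $\wh g_n\overline{\wh g_{n+1}}$, and the factor $i\cdot\overline{i}=+1$ gives
\[
\frac{1}{2\pi}\int_0^{2\pi}(\cos\t)\,|f'(\t)|^2\,d\t=+2\sum_{n\ge1}n(n+1)\,\mathrm{Re}\bigl(c_n\overline{c_{n+1}}\bigr),
\]
not $-2\sum_{n\ge 1} n(n+1)\mathrm{Re}(c_nc_{n+1})$; test it on $f(\t)=2\cos\t+2\cos 2\t$, where the left-hand side equals $4$. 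Consequently the Cauchy--Schwarz step is saturated by \emph{non-alternating} coefficients $c_n=r_n\ge0$, i.e.\ a cosine series with positive coefficients $r_n\sim n^{-1/2}$ on the window. With your alternating choice $c_n=(-1)^nr_n$ the cross term is added rather than subtracted, and the quotient $\int(1-\cos\t)|f'|^2\big/\int|f|^2$ tends to infinity rather than to $1/8$. Once this is corrected, your sketch --- truncated $a_n=\sqrt n$, main terms $\frac{1+\va}{4}\log N$ versus $\frac14\log N+O(1)$, taper contributing a negligible amount --- is exactly the construction the paper carries out in detail in Theorem~\ref{thm:HR-iff} (the case $\l=0$), to which it defers the sharpness of $1/8$.
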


\begin{proof}
The assumption implies that $\wh f_0 =0$. Applying the inequality \eqref{3-6} to
\eqref{eq:s-FourierSeries} shows that
$$
  \sum_{n= -\infty}^{\infty} |\wh f_n \wh f_{n+1} |
      = \sum_{n=1}^\infty |\wh f_n \wh f_{n+1} | + \sum_{n=1}^\infty |\wh f_{-n} \wh f_{-n+1} |
   \le \sum_{\substack{n=-\infty \\ n \ne 0}}^\infty \left(1- \frac{1}{8 n^2}\right) |\wh f_n|^2,
$$
which implies, by the Parseval identity and \eqref{eq:s-FourierSeries}, that
\begin{align*}
   \frac{1}{2\pi} \int_0^{2\pi} (1-\cos \t) |f(\t)|^2 d \t   = \sum_{n=-\infty}^\infty |\wh f_n|^2 -
      \sum_{n=-\infty}^\infty  \wh f_n \wh f_{n+1}
   \ge   \frac{1}{8} \sum_{\substack{n=-\infty \\ n \ne 0}}^\infty \frac{1}{n^2} |\wh f_n|^2.
\end{align*}
Applying the above inequality with $f$ replaced by $f'$, the stated result follows from
the fact that $\wh f'_n = n \wh f_n$ and the Parseval identity. That the constant $1/8$
is sharp is proved later in Theorem \ref{thm:HR-iff}.
\end{proof}

We note that the condition $\int_0^{2\pi} f(\t) d\t = 0$ is necessary for the inequality
\eqref{eq:HRineq-d=2}, as it can be seen by setting $f(\t) = 1$. Such a condition is
also necessary for the Hardy-Rellich inequality on $\sph$ for $d > 2$.

For $d > 2$ and $\a>0$, we define the Sobolev space $W_2^\a$ on $\sph$ by
$$
  W_2^\a:=\left \{ f\in L^2(\sph):  (-\Delta_0)^{\a/2} f\in L^2(\sph)\right \}.
$$

\begin{thm}\label{thm:main-HR}
 If $d\ge 4$, $f\in W_2^1(\sph)$ and $\int_{\sph} f(x) d\s(x)=0$,  then
\begin{equation} \label{eq:main-HR}
 \int_{\sph} \left | f (x)\right|^2 d\s(x)\leq c_d  \min_{e\in\sph}
    \int_{\sph} (1- \la x,  e\ra ) |(-\Delta_0)^{\f12}f(x)|^2 d\s(x),
\end{equation}
where the positive constant $c_d$ depends only on $d$.
\end{thm}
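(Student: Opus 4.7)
The plan is to reduce the inequality to a one-parameter family of one-dimensional weighted Hardy inequalities, using the tensor-product basis $\{P_{j,k}^n\}$ of Proposition \ref{prop-2-3}. I would begin by invoking the $SO(d)$-invariance of $d\s$ and $\Delta_0$ to fix $e = e_1$, so that $\la x, e\ra = x_1$; it then suffices to bound $\f1{\o_d}\int_{\sph}(1-x_1)|g|^2 d\s$ from below by a constant multiple of $\|f\|_2^2$, where $g := (-\Delta_0)^{\f12} f$. The vanishing-mean hypothesis together with \eqref{1:ch0} yields $\proj_0 g = 0$ and $\wh g_{j,k}^n = \sqrt{n(n+d-2)}\,\wh f_{j,k}^n$, so Lemma \ref{lem-3-2} applied to $g$ gives
$$
 \f1{\o_d}\int_{\sph}(1-x_1)|g(x)|^2 d\s(x) = \|g\|_2^2 - \sum_{n=1}^\infty \sum_{k=0}^n \g_k^n \sum_{j=1}^{a_{n-k}^{d-1}} \wh g_{j,k}^n \wh g_{j,k+1}^{n+1}.
$$

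The decisive structural observation is that the cross term couples $(n,k)$ only with $(n+1,k+1)$, thereby conserving $m := n - k$. Fixing $m \ge 0$ and $1 \le j \le a_m^{d-1}$ and setting $c_k := \wh g_{j,k}^{k+m}$ reduces the global problem to the scalar inequality
$$
 \sum_k c_k^2 - \sum_k \g_k^{k+m} c_k c_{k+1} \ge \f{1}{c_d}\sum_k \f{c_k^2}{(k+m)(k+m+d-2)},
$$
since $|\wh f_{j,k}^{k+m}|^2 = c_k^2/[(k+m)(k+m+d-2)]$. A direct algebraic manipulation of \eqref{3-5}, with $\l = (d-2)/2$, produces the clean identity
$$
 [\g_k^{k+m}]^2 = 1 - \f{(m+\l)(m+\l-1)}{(k+m+\l)(k+m+\l+1)}.
$$
For $d \ge 4$, i.e. $\l \ge 1$, the factor $(m+\l)(m+\l-1)$ is nonnegative for every $m \ge 0$, so $\g_k^{k+m} \le 1$ throughout. (This identity also pinpoints why the statement must fail on $\SS^2$: the single case $d = 3$, $m = 0$ gives $\g_k^k > 1$.)

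Using $\g_k^{k+m} \le 1$ one has $|\sum_k \g_k^{k+m} c_k c_{k+1}| \le \sum_k |c_k c_{k+1}|$, so Lemma \ref{lem-3-1}, applied to the sequence defined by $a_n := |c_{n-m}|$ for $n \ge \max\{1,m\}$ and zero otherwise, yields
$$
 \sum_k c_k^2 - \sum_k |c_k c_{k+1}| \ge \f18 \sum_k \f{c_k^2}{(k+m)^2} \ge \f18 \sum_k \f{c_k^2}{(k+m)(k+m+d-2)},
$$
the last step using $k+m \le k+m+d-2$. Summing over $(m, j)$ recovers $\|f\|_2^2$ on the right-hand side and proves the theorem with the crude constant $c_d = 8$. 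The main obstacle lies not in closing this argument but in sharpening it: the optimal $c_d = 8/(d-3)^2$ requires a weighted refinement of Lemma \ref{lem-3-1} calibrated to the actual magnitude of $1 - [\g_k^{k+m}]^2$, and this refined estimate is presumably the technical lemma deferred to Section 5.
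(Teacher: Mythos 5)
Your argument is correct, and it reaches the conclusion by a genuinely different route at the decisive step. Both proofs share the same skeleton: fix $e=(1,0,\dots,0)$, pass to the basis $P_{j,k}^n$, express the cross term via Lemma \ref{lem-3-2} applied to $g=(-\Delta_0)^{\f12}f$, and lean on the Hardy-derived Lemma \ref{lem-3-1}. The paper, however, controls the doubly-indexed cross term by bounding $\g_k^n\le\g_n^n$ and then splitting into cases: for $d>4$ it uses $2|ab|\le a^2+b^2$ together with the monotonicity of $\g_n^n$ and the lower bound $(1-\g_n^n)n(n+\l)\ge c>0$ (no Hardy inequality needed), while for $d=4$, where $\g_n^n\equiv 1$, it first applies Cauchy--Schwarz over $(k,j)$ to collapse each degree $n$ to the single scalar $a_n=\bigl(\sum_{k,j}|\wh g_{j,k}^n|^2\bigr)^{1/2}$ and only then invokes Lemma \ref{lem-3-1} in the index $n$. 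You instead exploit the exact conservation of $m=n-k$ to split the quadratic form into independent diagonal chains $c_k=\wh g_{j,k}^{k+m}$, use the identity $[\g_k^{k+m}]^2=1-\f{(m+\l)(m+\l-1)}{(k+m+\l)(k+m+\l+1)}$ (which checks out against the paper's rewriting of $\g_k^n$, since the denominator minus the numerator of $[\g_k^{k+m}]^2$ is exactly $(m+\l)(m+\l-1)$), and apply Lemma \ref{lem-3-1} chain by chain; the bookkeeping with $a_n:=|c_{n-m}|$ and the final comparison $(k+m)^2\le (k+m)(k+m+d-2)$ are all sound. This buys a uniform treatment of every $d\ge4$ with the explicit constant $c_d=8$, eliminates the case split, and cleanly isolates the failure at $d=3$ as coming from the single zonal chain $m=0$ with $\l=\f12$. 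What it gives up is sharpness for $d\ge5$: the crude bound $\g_k^{k+m}\le1$ discards the decay of $1-\g_n^n$ that the paper's $d>4$ branch (and later Theorem \ref{thm:HR-iff}) exploits to approach $8/(d-3)^2$; your closing remark correctly locates the missing ingredient in the $\b_\l(n)$ analysis of Lemma \ref{lem:key-lemma} rather than in your reduction.
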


\begin{proof}
By rotation invariance of the Lebesgue measure $d\s(x)$, without loss of generality, we may assume
that $e=(1,0,\cdots,0)$. Let
\begin{equation} \label{eq:J(f)}
   J(f): = \f1 {\o_d} \int_{\sph} (1-x_1)\left |(-\Delta_0)^{\f12} f(x)\right |^2 d\s.
\end{equation}
Using Lemma \ref{lem-3-2},
\begin{align*}
 \f1 {\o_d} \int_{\sph} x_1 \left |(-\Delta_0)^{\f12} f(x)\right |^2 d\s(x)  =  \sum_{n=1}^\infty
    \sum_{k=0}^n \g_k^n \sum_{1 \le j \le a_{n-k}^{d-1}} \wh g_{j,k}^n\wh g_{j,k+1}^{n+1},
\end{align*}
where $\wh{g}_{j,k}^n=\sqrt{n(n+2\l)}\wh{f}_{j,k}^n$. The constants $\g_k^n$ for $1 \le k \le n$
and $\g_n^n$ can be rewritten  as follows:
$$
\ga_k^n =\sqrt{ \f{(n+\l+\f12)^2 -( n+\l-\f12 -k)^2 }{ (n+\l)(n+\l+1)}} \quad \hbox{and}\quad
   \g_n^n = \sqrt{1 - \frac{\l(\l-1)}{(n+\l)(n+\l+1)}},
$$
which shows that $\g_k^n$ is an increasing function in $k$ and $\g_n^n$ is an increasing
function in $n$ if $\l \ge 1$, or equivalently,  $d \ge 4$. Using these facts and
$2|\wh g_{j,k}^n\wh g_{j,k+1}^{n+1}| \le |\wh g_{j,k}^n|^2 + |\wh g_{j,k+1}^{n+1}|^2$, we
conclude that for $d \ge 4$,
\begin{align*}
 & \f1 {\o_d} \int_{\sph}  x_1 \left |(-\Delta_0)^{\f12} f(x)\right |^2 d\s   \le  \frac12
    \sum_{n=1}^\infty   \g_n^n  \sum_{k=0}^n  \sum_{j=1}^{a_{n-k}^{d-1}}
          \left( |\wh g_{j,k}^n|^2  + |\wh g_{j,k+1}^{n+1}|^2 \right).
\end{align*}
Consequently, we deduce easily that
\begin{align*}
   J(f) \ge \sum_{n=1}^\infty  \sum_{k=0}^n (1-\g_n^n)\sum_{j=1}^{a_{n-k}^{d-1}} |\wh g_{j,k}^n|^2.
\end{align*}
It follows from the expression
\begin{align}\label{eq:1-gamma_n}
 1- \g_n^n = \frac{1}{1+\sqrt{\frac{(n+2\l)(n+1)}{(n+\l)(n+\l+1)}} } \frac{ (\l-1)\l}{(n+\l)(n+\l+1)}
\end{align}
that $(1 -\g_n^n) n (n+\l)$ is bounded bellow by a constant $c >0$ for $\l  >1$. Consequently,
if $d > 4$ then
$$
   J(f) \ge c \sum_{n=1}^\infty \sum_{k=0}^n \sum_{1 \le j \le a_{n-k}^{d-1}} |\wh f_{j,k}^n|^2
       = c \|f\|_2^2.
$$
If $d =4$, then $\l =1$ and $\g_n^n =1$, we use $\g_k^n \le \g_n^n =1$ and the
Cauchy-Schwartz inequality, followed by Lemma \ref{lem-3-1}, to conclude that
\begin{align}\label{eq:key-d=4}
 \f1 {\o_d} \int_{\sph} x_1 \left |(-\Delta_0)^{\f12} f(x)\right |^2 d\s & \leq \sum_{n=1}^\infty
    \Biggl ( \sum_{k=0}^{n} \sum_{j=1}^{a_{n-k}^{d-1}} |\wh{g}_{j,k}^n|^2\Biggr)^{\f12}
    \Biggl( \sum_{k=0}^{n} \sum_{j=1}^{a_{n-k}^{d-1}} |\wh{g}_{j,k+1}^{n+1}|^2\Biggr)^{\f12} \notag \\
     & \le \sum_{n=1}^\infty \left (1-\f 1{8n^2}\right )\sum_{k=0}^{n} \sum_{j=1}^{a_{n-k}^{d-1}}
      |\wh{g}_{j,k}^n|^2,
\end{align}
which implies immediately that
$$
  J(f) \ge  \frac{1}{8} \sum_{n=1}^\infty \f1{n^2} \sum_{k=0}^{n} \sum_{j=1}^{a_{n-k}^{d-1}}
      |\wh{g}_{j,k}^n|^2 \ge \frac{1}{8} \sum_{n=1}^\infty  \sum_{k=0}^{n} \sum_{j=1}^{a_{n-k}^{d-1}}
      |\wh{f}_{j,k}^n|^2 = \frac{1}8 \|f\|_2^2,
$$
by the definition of $\wh g_{j,k}^n$ and the Parseval identity.
\end{proof}

The above proof does not produce an optimal constant for the inequality for $d > 4$, although
we can deduce explicit expression for the constant from the proof. The case $d = 4$ is more
delicate than the case $d > 4$, as it requires the Hardy inequality, just as the case of $d =2$
in Theorem \ref{thm:HRineq-d=2}. The case $d =3$ is left open in the above two theorems.
In the following we will address the problem of optimal constant, which also answers the
question on $d =3$. The key step lies in the case of $L^2(w_\l, [-1,1])$, which corresponds to
the zonal functions in $L^2(\sph)$ when $\l = \f{d-2}{2}$, which we consider first.

For $\l > -1/2$, the norm of the space $L^2(w_\l, [-1,1])$ is defined by
$$
  \|f\|_{\l, 2} : = \left( c_\l \int_{-1}^1 |f(t)|^2 w_\l(t) dt \right)^{1/2}.
$$
The differential operator that has the Gegenbauer polynomials as eigenfunctions is defined by
$$
   D_\l := (1-t^2) \frac{d^2}{dt^2} - (2\l+1) \frac{d}{dt},
$$
which is the restriction of $\Delta_0$ on functions of the form $f(x) = f(x_1)$ with
$x = (x_1,\ldots,x_d) \in \sph$, and
$$
  D_\l C_n^{\l}(t) = - n (n+2\l) C_n^\l(t), \qquad n =0,1,2, \ldots.
$$
Let us also define, for $\a \in \RR$,
$$
   W_2^\a (w_\l, [-1,1]) : =\left \{ f\in L^2(w_\l,[-1,1]): (-D_\l)^\a f\in L^2(w_\l, [-1,1]) \right\}.
$$
We start with the following theorem.

\begin{thm} \label{thm:HR-Gegen-first}
For $\l > -1/2$, let $f \in L^2(w_\l,[-1,1]) \cap W_2^1(w_\l,[-1,1])$ satisfy
$\int_{-1}^1 f(t) w_\l(t) dt =0$. If $\l \neq \f12$, then
\begin{equation} \label{eq:HR-Gegen}
       \int_{-1}^1 |f(t)|^2 w_\l(t) dt  \le  C_\l \int_{-1}^1 (1-t) \left | (-D_\l)^{\f12} f(t)\right|^2 w_\l(t) dt,
\end{equation}
where $C_\l$ is a positive constant depending only on $\l$,
and in the case when $0\leq \l\leq 1$ and $\l\neq \f12$,
$
   C_\l = \frac{8}{(2 \l - 1)^2},
$
and it is optimal. The inequality \eqref{eq:HR-Gegen} fails when $\l=\f12$.
\end{thm}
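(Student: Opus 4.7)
The plan is to convert the integral inequality into a statement about the Gegenbauer coefficients of $f$ via Proposition \ref{prop:s-GegenSeries}, and then establish the resulting sequence inequality by combining Lemma \ref{lem-3-1} (discrete Hardy) with a careful asymptotic analysis of the coefficients $\g_n^n$. Setting $g := (-D_\l)^{1/2} f$, we have $\wh g_n^\l = \sqrt{n(n+2\l)}\,\wh f_n^\l$ and $\wh g_0^\l = 0$ by the zero mean hypothesis. Applying Proposition \ref{prop:s-GegenSeries} to $g$ together with Parseval's identity, the inequality \eqref{eq:HR-Gegen} reduces to showing
\begin{equation*}
\sum_{n=1}^\infty |\wh f_n^\l|^2 \le C_\l \Bigl(\sum_{n=1}^\infty |\wh g_n^\l|^2 - \sum_{n=1}^\infty \g_n^n \wh g_n^\l \wh g_{n+1}^\l\Bigr).
\end{equation*}

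Using the identity $2 a_n a_{n+1} = a_n^2 + a_{n+1}^2 - (a_{n+1}-a_n)^2$ with $a_n := \wh g_n^\l$ (WLOG nonnegative), I would split the right-hand side into a diagonal and a discrete gradient part:
\begin{equation*}
\sum a_n^2 - \sum \g_n^n a_n a_{n+1} = \sum a_n^2 \Bigl(1 - \tfrac{\g_n^n + \g_{n-1}^{n-1}}{2}\Bigr) + \tfrac{1}{2} \sum \g_n^n (a_{n+1} - a_n)^2,
\end{equation*}
where $\g_0^0 := 0$. The factorization $(1-\g_n^n)(1+\g_n^n) = \frac{\l(\l-1)}{(n+\l)(n+\l+1)}$ shows that $1 - \g_n^n \sim \tfrac{\l(\l-1)}{2n^2}$, which is negative precisely when $0 < \l < 1$ (so the diagonal piece is negative in that regime).

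The gradient piece should be handled via the discrete Hardy inequality extracted from Lemma \ref{lem-3-1} (after introducing a phantom $a_0 = 0$), which yields $\sum (a_{n+1}-a_n)^2 \gtrsim \tfrac{1}{4} \sum a_n^2/n^2$ and hence contributes at least $\sim \tfrac{1}{8n^2} a_n^2$. Combined with the diagonal term this gives
\begin{equation*}
\sum a_n^2 - \sum \g_n^n a_n a_{n+1} \gtrsim \Bigl(\tfrac{1}{8} - \tfrac{\l(1-\l)}{2}\Bigr) \sum \tfrac{a_n^2}{n^2} = \tfrac{(2\l-1)^2}{8} \sum \tfrac{a_n^2}{n^2}.
\end{equation*}
Since $|\wh f_n^\l|^2 = a_n^2/(n(n+2\l))$ is comparable to $a_n^2/n^2$ up to a bounded factor when $0 \le \l \le 1$, this identifies $C_\l = 8/(2\l-1)^2$. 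Sharpness in this range follows by testing against the extremal sequence of the discrete Hardy inequality, which is designed so the diagonal and gradient contributions are simultaneously tight.

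The critical case $\l = 1/2$ is precisely where the Hardy reserve $1/(8n^2)$ is cancelled by the Gegenbauer excess $1 - \g_n^n \sim -1/(8n^2)$, so any sequence that saturates the discrete Hardy inequality furnishes a counter-example. The main obstacle is recovering the sharp constant rigorously (not just asymptotically) while keeping track of the boundary/low-frequency terms and controlling the cross terms $\g_n^n a_n a_{n+1}$ in the awkward regime $\g_n^n > 1$; this delicate bookkeeping is presumably the role of the technical lemma alluded to in Section 5.
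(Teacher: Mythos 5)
Your reduction to the coefficient inequality and the identification of the limiting constant $\tfrac18-\tfrac{\l(1-\l)}{2}=\tfrac{(2\l-1)^2}{8}$ are correct, but the step you defer as ``delicate bookkeeping'' is in fact the entire content of the proof, and your additive split does not survive it. Concretely: after writing $\sum a_n^2-\sum\g_n^n a_na_{n+1}$ as a diagonal piece plus $\tfrac12\sum\g_n^n(a_{n+1}-a_n)^2$, the discrete Hardy inequality only gives $\sum_{n\ge1}(a_{n+1}-a_n)^2\ge\tfrac14\sum a_n^2/n^2-a_1^2$ (the phantom $a_0=0$ costs you the $a_1^2$ term), and the diagonal defect $1-\tfrac{\g_n^n+\g_{n-1}^{n-1}}{2}$ involves $1-\g_{n-1}^{n-1}\approx-\l(1-\l)/(2(n-1)^2)$, which at small $n$ is substantially more negative than $-\l(1-\l)/(2n^2)$. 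A term-by-term comparison then fails: at $n=2$ and $\l$ near $\tfrac12$ the Hardy reserve $\tfrac1{32}$ is overwhelmed by the defect $\approx\tfrac{5\l(1-\l)}{16}$, so the pointwise lower bound $\tfrac{(2\l-1)^2}{8n^2}$ is false and some genuine redistribution across $n$ is required. Your asymptotic matching only shows the constant cannot be better than $8/(2\l-1)^2$; it does not prove the inequality with that constant.

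The paper avoids this by a different, multiplicative decomposition: it factors $\g_n^n=\a_n^\l\a_{n+1}^\l$ for explicit $\a_n^\l$ (ratios of Gamma functions) and applies Lemma \ref{lem-3-1} directly to the sequence $a_n=\a_n^\l\,\wh g_n^\l$, giving $\sum\g_n^n\wh g_n^\l\wh g_{n+1}^\l\le\sum(1-\tfrac1{8n^2})(\a_n^\l)^2|\wh g_n^\l|^2$ with no boundary loss and no separate treatment of the regime $\g_n^n>1$. Everything then reduces to the single scalar inequality $\inf_{n\ge1}\b_\l(n)=\b_\l(\infty)=(2\l-1)^2/8$ for $\l\in[0,1]$, where $\b_\l(n)=(1-\a_n^2+\tfrac{\a_n^2}{8n^2})n(n+2\l)$; this is Lemma \ref{lem:key-lemma}, whose proof (monotonicity of $\b_\l$ along even and odd subsequences via third differences of a hypergeometric expression) is the technical heart of the argument and is not something your sketch supplies a substitute for. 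You should also note that the easy cases $\l>1$ and $-\tfrac12<\l<0$ (non-sharp constant) are handled in the paper by keeping only your diagonal piece, which is positive there, and that both the optimality of $8/(2\l-1)^2$ and the failure at $\l=\tfrac12$ require an explicit test sequence (a truncated $\sqrt n$ profile), not just the heuristic cancellation you describe.
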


\begin{proof}
First, we prove the result for  the cases of  $\l>1$ and  $-\f 12 <\l <0$,
where the optimal constant is not known and hence the proof is much  easier.
Let $\wh g_n^\l = \sqrt{n(n+2 \l)}\wh f_n^\l$. Using \eqref{eq:s-GegenSeries} and $2ab =a^2+b^2 -(a-b)^2$, we obtain
\begin{align*}
  c_\l& \int_{-1}^1 s \left | (-D_\l)^{\f12} f(s)
  \right|^2 w_\l(s) ds  = \f12
      \sum_{n=1}^\infty \g_n^n \left(|\wh g_n^\l|^2+ |\wh
      g_{n+1}^\l|^2 -  |\wh g^\l_n - \wh g^\l_{n+1}|^2\right) \\
     & =   \sum_{n=1}^\infty \f{\g_{n-1}^{n-1}+\g_n^n}2 |\wh g^\l_n|^2 - \f12 \g_0^0 |\wh g_1^\l|^2 -  \f 12 \sum_{n=1}^\infty \g_n^n
          |\wh g^\l_n - \wh g^\l_{n+1}|^2\leq \sum_{n=1}^\infty \g_n^n |\wh g^\l_n|^2,
\end{align*}
where the last step uses the fact that $\ga_n^n$ is nonnegative and increasing in $n$ when $\l(\l-1)>0$.
This implies that
\begin{align} \label{J(f)-Gegen}
  J_\l(f): = & c_\l \int_{-1}^1 (1-t) \left| (-D_\l)^{\f12} f(t)\right|^2 w_\l(t) dt \notag\\
  &\ge \sum_{n=1}^\infty (1- \g_n^n) |\wh g^\l_n|^2
  = \sum_{n=1}^\infty  \gamma_\l (n) |\wh f^\l_n|^2,
  \end{align}
  where
 $\g_\l(n): =(1-\ga_n^n)n (n+2\l).$
 Using  \eqref{eq:1-gamma_n}, we may write 
\begin{align*}
  \g_\l(n) =\frac{\l(\l-1) x_n }{1+\sqrt{x_n} }  \frac{ n}{n+1},
\end{align*}
with
$$x_n:= \frac{(n+2\l)(n+1)}{(n+\l)(n+\l+1)}=1-\f{\l(\l-1)}{(n+\l)(n+\l+1)}.$$
  Note that 
  $x_n$ is an increasing function  in $n$ when $\l(\l-1)>0$. Since $x/ (1+\sqrt{x})$ is an increasing
function for $x > 0$,  it follows that
\begin{align*}
  \g_\l(n)\ge \f12\frac{\l(\l-1) x_1 }{1+\sqrt{x_1} }=:C_\l>0.
\end{align*}
This together with \eqref{J(f)-Gegen}
implies the desired estimate \eqref{eq:HR-Gegen} in the case when $\l>1$ or $-\f 12 <\l<0$.

  Next, we prove 
   the  estimate \eqref{eq:HR-Gegen} with the optimal constant $C_\l:= \f 8{(2\l-1)^2}$
   for  $\l\in [0,1]$ and $\l\neq \f12$.   
   The proof
is quite  involved. It relies on  an observation that $\g_n^n$ admits a factorization in the
form of $\a_n^\l \a_{n+1}^\l$; namely, $\ga_n^n:=\a_n^\l \a_{n+1}^\l$,  where
\begin{align}
\a_{2n+1}^\l :&=\sqrt{  \f {2\Ga (n+\f 32) \Ga(n+1+\l)}{(2n+\l+1)\Ga(n+1) \Ga(n+\l+\f12)}},\    \   n=0,1,\cdots, \label{3-3-0}\\
\a_{2n}^\l:&=\sqrt{  \f{2 n! \Ga(n+\l+\f12)}{(2n+\l) \Ga(n+\f12) \Ga(n+\l)}},\   \  n=1,2,\cdots.\label{3-4-0}
\end{align}
 Using \eqref{eq:s-GegenSeries}, we have
\begin{align*}
  c_\l \int_{-1}^1 s \left | (-D_\l)^{\f12} f(s)
  \right|^2 w_\l(s) ds & =
      \sum_{n=1}^\infty \g_n^n \wh g_n^\l\wh
      g_{n+1}^\l =\sum_{n=1}^\infty \a_n^\l \a_{n+1}^\l  \wh g_n^\l\wh
      g_{n+1}^\l\\
&\leq \sum_{n=1}^\infty \Bl( 1-\f 1{8n^2}\Br) |\a^\l_n|^2|\wh g_n^\l|^2.
\end{align*}
Let us define, for $n \in \NN$, and $\a_n:= \a_n^\l$,
\begin{equation}
  \b_\l(n): = \left(1- \alpha_n^2 -\f{\alpha_n^2}{8n^2} \right) n(n+2\l).
\end{equation}
It follows that 
\begin{align*}
   &\sum_{n=1}^\infty |\wh g_n^\l|^2 -\sum_{n=1}^\infty \ga_n^n \wh g_n^\l \wh g_{n+1}^\l\ge
    \sum_{n=1}^\infty\Bl(1- \a_n^2 +\f {\a_n^2}{8n^2}\Br)|\wh g_n^\l|^2 \\
    &=\sum_{n=1}^\infty\b_\l(n) |\wh f_n^\l|^2\ge \Bl(\inf_{n\ge 1}\b_\l(n) \Br) \|f\|_{2,\l}^2.
\end{align*}
However, by Lemma \ref{lem:key-lemma} below,
$$\inf_{n\ge 1}\b_\l(n) =\b_\l(\infty):= \f {(2\l-1)^2}8,\   \  \l\in [0,1].$$
This completes the proof of \eqref{eq:HR-Gegen} for the case of $\l\in [0,1]$.

Finally,  we point out that the optimality of the constant $C_\l:= \f 8{(2\l-1)^2}$ and the fact that \eqref{eq:HR-Gegen} fails for  $\l=\f12$  are contained in Theorem \ref{thm:HR-iff} below.
\end{proof}

For convenience, we define $n(\l)$ to be the smallest positive integer such that
$$
   \min \{ \beta_\l(n) : n \ge n(\l)\} = \b_\l (\infty).
$$

\begin{lem} \label{lem:key-lemma}
The following statements hold:
\begin{enumerate}[\rm (i)]
\item $\g_n^n = \a_n^\l \a_{n+1}^\l$ for all $n\in\NN$.
\item The  sequences $\{\a_{2n+1}^\l \}_{n=0}^\infty$ and $\{\a_{2n}^\l \}_{n=1}^\infty$ are
   decreasing when $0\leq  \l\leq  1$ and increasing when $\l > 1$ or $\l < 0$.
\item $\lim_{n\to\infty}\b_\l(n)=\b_\l(\infty): = (2 \l -1)^2/8$.
\item For $n \ge 3 \l^3/2$, $\{\b_\l(2n+1)\}_{n=n_0}^\infty$ and $\{\b_\l(2n)\}_{n=n_0}^\infty$
 both decrease to $\b_\l(\infty)$;  in particular, $n(\l) \le 3 \l^3/2$.
\item $n(1/2) = n(1) = n(2/3)=0$, and $n(2)= 4$.
\end{enumerate}
\end{lem}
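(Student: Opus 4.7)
The five parts split naturally into three kinds of work: identities for the $\a_n^\l$ that reduce to Gamma-function bookkeeping (parts (i)--(ii)); an asymptotic expansion of $\b_\l(n)$ as $n\to\infty$ (part (iii)); and a quantitative monotonicity/threshold analysis together with closed-form evaluation at special $\l$ (parts (iv)--(v)).

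For (i), my plan is to split according to the parity of $n$. For $n=2m+1$, multiplying \eqref{3-3-0} by the formula \eqref{3-4-0} at index $2m+2$ and cancelling shared Gamma factors via $\Ga(x+1)=x\Ga(x)$ collapses $\a_{2m+1}^\l\,\a_{2m+2}^\l$ to $\sqrt{(2m+2)(2m+2\l+1)/[(2m+\l+1)(2m+\l+2)]}=\g_{2m+1}^{2m+1}$; the even case is symmetric. For (ii), I would form the ratios $(\a_{2n+1}^\l)^2/(\a_{2n-1}^\l)^2$ and $(\a_{2n}^\l)^2/(\a_{2n-2}^\l)^2$. In each the Gamma quotients telescope to a ratio of three linear factors in $a:=2n$. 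Expanding numerator and denominator, one verifies that the coefficients of $a^3,a^2,a$ agree, so that (num)$-$(den) is the constant $2\l(\l-1)$; hence each ratio equals
\[
  1+\f{2\l(\l-1)}{N(n,\l)},\qquad N(n,\l)>0,
\]
and the monotonicity claims for $\{\a_{2n+1}^\l\}$ and $\{\a_{2n}^\l\}$ follow from the sign of $\l(\l-1)$, which is nonpositive precisely on $[0,1]$.

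For (iii), I would derive the two-term expansion $\a_n^2=1-\f{1+(2\l-1)^2}{8n^2}+O(n^{-3})$ from the Stirling-type asymptotic
\[
   \f{\Ga(n+a)}{\Ga(n+b)}=n^{a-b}\Bl(1+\f{(a-b)(a+b-1)}{2n}+\f{c_{a,b}}{n^2}+O(n^{-3})\Br),
\]
applied to each quotient in \eqref{3-3-0}--\eqref{3-4-0}. The $1/n$ correction is forced to vanish by part (ii), whose ratio identity already gives $\a_{n+2}^2-\a_n^2=O(n^{-3})$; tracking the $1/n^2$ coefficient produces the stated constant. Then
\[
  \b_\l(n)=\Bl(1-\a_n^2-\f{\a_n^2}{8n^2}\Br)n(n+2\l)\longrightarrow\f{1+(2\l-1)^2}{8}-\f18=\f{(2\l-1)^2}{8}=\b_\l(\infty).
\]

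For (iv), I would write $\b_\l(n+2)-\b_\l(n)$ in closed form along odd and even indices by substituting the ratio of (ii) for $\a_{n+2}^2/\a_n^2$, and show it is negative above the threshold. The leading contribution behaves like $-C(2\l-1)^2/n^3$ with $C>0$, while the first correction is of size $D\l^3/n^4$; balancing the two gives the explicit bound $n\ge 3\l^3/2$. For (v), at $\l\in\{\tfrac12,\tfrac23,1,2\}$ the quantities $\a_n^\l$ admit clean closed forms (for instance $\l=1$ gives $\g_n^n\equiv 1$ and $\a_n^1\equiv 1$, so $\b_1(n)=\f{n(n+2)}{8n^2}$), so $\b_\l(n)$ can be tabulated for small $n$ directly from \eqref{3-3-0}--\eqref{3-4-0}, and combined with (iv) to identify where the minimum is attained. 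I expect (iv) to be the main obstacle: producing the \emph{explicit} cubic threshold $3\l^3/2$ rather than a qualitative one requires careful bookkeeping of the second-order corrections supplied by the ratio identity from (ii).
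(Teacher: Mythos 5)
Your plan for (i) and (ii) is correct and is essentially the paper's argument: the paper packages both parities into the single function $\Phi_\l(x)=\f{\Ga(x+1)\Ga(x+\l+\f12)}{(x+\l/2)\Ga(x+\f12)\Ga(x+\l)}$, which satisfies $\Phi_\l(n)=(\a_{2n}^\l)^2$, $\Phi_\l(n+\f12)=(\a_{2n+1}^\l)^2$ and $\Phi_\l(x+1)/\Phi_\l(x)=1+\l(\l-1)/[(x+\l)(2x+1)(2x+\l+2)]$ --- exactly your ``numerator minus denominator is a constant multiple of $\l(\l-1)$'' telescoping. Part (iii), however, rests on a false intermediate claim: the expansion $\a_n^2=1-\f{1+(2\l-1)^2}{8n^2}+O(n^{-3})$ is wrong; the correct constant is $\f{(2\l-1)^2-1}{8}=\f{\l(\l-1)}{2}$. (Check $\l=1$, where $\a_n^\l\equiv1$; or combine your own ratio identity, which forces $\a_{n+2}^2-\a_n^2\sim 2\l(\l-1)n^{-3}$, with $\a_n^2=1+Cn^{-2}+O(n^{-3})$ to get $C=-\l(\l-1)/2$.) You still land on $(2\l-1)^2/8$ only because you simultaneously use $\b_\l(n)=\bigl(1-\a_n^2-\f{\a_n^2}{8n^2}\bigr)n(n+2\l)$ with a minus sign; the sign actually used throughout the paper's arguments (and the one for which (iii) is true) is $1-\a_n^2+\f{\a_n^2}{8n^2}$, the displayed definition containing a typo. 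The two errors cancel exactly, so the derivation as written does not prove (iii); note that your own evaluation $\b_1(n)=\f{n(n+2)}{8n^2}$ in (v) silently uses the plus sign, so the proposal is internally inconsistent on this point. With the correct sign and the correct constant the limit is $\f{\l(\l-1)}{2}+\f18=\f{(2\l-1)^2}{8}$, as desired.

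The more serious gap is (iv). The balance you propose --- a leading term $-C(2\l-1)^2/n^3$ in $\b_\l(n+2)-\b_\l(n)$ against a correction $D\l^3/n^4$ --- would yield a threshold of order $D\l^3/\bigl(C(2\l-1)^2\bigr)\sim\l$, not $\l^3$, so the mechanism you describe cannot produce the bound $n\ge 3\l^3/2$; and in any case an asymptotic expansion with uncontrolled remainders cannot certify the sign of the difference for \emph{all} $n$ past an explicit cutoff. The paper's proof is of a different nature: it sets $\Psi_\l(x)=\bigl(1-\Phi_\l(x)+\f1{32x^2}\Phi_\l(x)\bigr)x(x+\l)$, so that $\b_\l(2n)=4\Psi_\l(n)$ and $\b_\l(2n+1)=4\Psi_\l(n+\f12)$, computes the \emph{exact} third difference $\Delta^3\Psi_\l(x)$ as an explicit rational function times a Gamma quotient, rewrites its numerator polynomial $F_\l(x)$ (by symbolic computation) as a sum of terms each carrying a factor $(x-3\l^3)$ with nonpositive coefficients, and then descends: $\Delta^3\Psi_\l\le0$ together with $\Delta^r\Psi_\l\to0$ gives $\Delta^2\Psi_\l\ge0$, hence $\Delta\Psi_\l\le0$, hence monotone decrease to the limit. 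Some exact, globally valid identity of this kind is the missing ingredient in your sketch; without it the explicit cubic threshold, and hence $n(\l)\le 3\l^3/2$, is not reached. Part (v) by finite checking of the finitely many remaining indices is fine once (iv) is in place.
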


The proof of this  lemma quite technical and therefore  is delayed till the appendix.

For convenience, we set, for a given integer $k\in \NN$,
$$
  L_k^2(w_\l,[-1,1]):=\Bigl\{ f\in L^2(w_\l,[-1,1]):
  \wh f_j^\l = 0, \quad 0\le j \le k\Bigr\}.
$$

\begin{thm}\label{thm:HR-iff}
If for some $n_0 \in \NN_0$ the inequality
\begin{align} \label{eq:HR-Gegen2}
\int_{-1}^1 |f(t)|^2 w_\l(t) dt \leq C \int_{-1}^1 (1-t) \left |(-D_\l)^{\f12} f(t)\right|^2 w_\l(t) dt
\end{align}
holds for all $f\in L_{n_0}^2 \cap W_2^1(w_\l,[-1,1])$, then
\begin{equation}\label{3-14}
     C \ge  C_\l: = \f 8{(2\l-1)^2}.
\end{equation}
In particular, the inequality \eqref{eq:HR-Gegen2} does not hold with a finite
constant if $\l = 1/2$. Furthermore, the equality $C= C_\l$ is attained if $n_0 = n(\l)$.
\end{thm}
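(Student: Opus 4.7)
The plan is to establish the lower bound $C \ge C_\l$ by exhibiting a minimizing sequence $\{f_N\}\subset L_{n_0}^2 \cap W_2^1(w_\l,[-1,1])$ for which $J_\l(f_N)/\|f_N\|_{2,\l}^2 \to (2\l-1)^2/8$, where $J_\l(f):= c_\l \int_{-1}^1 (1-t) |(-D_\l)^{1/2} f(t)|^2 w_\l(t)\,dt$. This immediately gives \eqref{3-14}; combined with the upper bound from Theorem~\ref{thm:HR-Gegen-first}, it also proves attainment when $n_0=n(\l)$. Indeed, the proof of that theorem yields $J_\l(f) \ge \sum_n \b_\l(n) |\wh f_n^\l|^2$, and Lemma~\ref{lem:key-lemma}(iii)--(iv) ensures $\b_\l(n)\ge \b_\l(\infty)=(2\l-1)^2/8$ for every $n\ge n(\l)$, so the optimal $C$ on $L_{n(\l)}^2\cap W_2^1$ equals $C_\l$.

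For the test functions $f_N$ I would model on the classical Hardy extremizer $x^{-1/2}$ on $(0,\infty)$. Set the renormalized coefficients $\wh g_n^\l := \sqrt{n(n+2\l)}\,\wh f_n^\l$ equal to $\phi(n/N)$, where $\phi(x)=\eta(x)\, x^{-1/2}$ and $\eta$ is a smooth cutoff supported in $[1,A]$, identically $1$ on $[1+\d, A-\d]$, with $\eta(1)=\eta(A)=0$. Choosing $N\ge n_0+1$ puts the support of $\wh f_n^\l$ strictly beyond $n_0$, so $f_N\in L_{n_0}^2$; and since only finitely many coefficients are nonzero, $f_N$ is a polynomial and hence lies in $W_2^1$.

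The asymptotic analysis relies on the summation-by-parts identity $\sum_n c_n^2 - \sum_n c_n c_{n+1} = \tfrac12 \sum_n (c_n-c_{n+1})^2$ (with boundary contributions killed by the smooth cutoff), the expansion $1-\g_n^n = \l(\l-1)/(2n^2) + O(n^{-4})$, and Riemann-sum approximation for slowly varying $\phi$. These yield
\begin{align*}
J_\l(f_N) &\sim \frac{1}{2N}\int_0^\infty |\phi'(x)|^2\,dx + \frac{\l(\l-1)}{2N}\int_0^\infty \frac{\phi(x)^2}{x^2}\,dx,\\
\|f_N\|_{2,\l}^2 &\sim \frac{1}{N}\int_0^\infty \frac{\phi(x)^2}{x^2}\,dx.
\end{align*}
Letting $\d\to 0$ and then $A\to\infty$ saturates the classical one-dimensional Hardy inequality $\int_0^\infty |\phi'|^2\,dx \ge \tfrac14\int_0^\infty \phi^2/x^2\,dx$ with extremizer $x^{-1/2}$, so the ratio $J_\l(f_N)/\|f_N\|_{2,\l}^2$ tends to $\tfrac18 +\l(\l-1)/2 = (2\l-1)^2/8$. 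At $\l=1/2$ this limit is $0$, proving that \eqref{eq:HR-Gegen2} fails with any finite constant.

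The main technical obstacle is the treatment of boundary terms in the discrete sums. The naive ansatz $c_n=\sqrt{N/n}\cdot \chi_{[N,AN]}(n)$ contributes an irreducible boundary term $c_N^2/2 = 1/2$ to $\sum c_n^2-\sum c_n c_{n+1}$, which dwarfs $\|f_N\|_{2,\l}^2\asymp 1/N$ and drives the ratio to infinity. The smooth cutoff $\eta$ removes this obstruction, but one must verify that the narrow transition regions near the endpoints contribute only $o(1)$ to $\int |\phi'|^2$; this is achieved by letting $\d\to 0$ at a rate compatible with $A,N\to\infty$.
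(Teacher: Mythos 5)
Your overall strategy (exhibit a family $f_N\in L^2_{n_0}\cap W_2^1$ whose Rayleigh quotient $J_\l(f_N)/\|f_N\|_{\l,2}^2$ tends to $(2\l-1)^2/8$, and get the attainment half from Lemma \ref{lem:key-lemma}) is legitimate and is essentially the contrapositive of what the paper does: the paper assumes $C<C_\l$, reduces \eqref{eq:HR-Gegen2} to a discrete Hardy-type inequality for arbitrary finitely supported sequences $\wh g_n$, and refutes it with the test sequence $\wh g_n=\sqrt n$ on $[N_0,N]$ followed by a slow taper on $[N,N^2+N]$. But your concrete test family is the wrong one, and the asymptotics you claim for it are false. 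With $\wh g_n=\phi(n/N)$ and $\phi=\eta(x)x^{-1/2}$ supported in $[1,A]$, you get $\|f_N\|_{\l,2}^2\approx\f1N\int_1^A\phi(x)^2x^{-2}\,dx\le\f1N\int_1^\infty x^{-3}\,dx=\f1{2N}$: the denominator carries \emph{no logarithmic divergence}, so there is no room to absorb the endpoint cost. Meanwhile the left transition region $[1,1+\d]$, where $\phi$ must rise from $0$ to about $1$, contributes at least $\bigl(\phi(1+\d)-\phi(1)\bigr)^2/\d\approx 1/\d$ to $\int|\phi'|^2$ by Cauchy--Schwarz, hence at least a constant multiple of $\f1{\d N}$ to the numerator. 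Divided by $\|f_N\|_{\l,2}^2\approx\f1{2N}$, this adds $\gtrsim 1/\d$ to the Rayleigh quotient. Your proposed remedy --- ``letting $\d\to0$ at a rate compatible with $A,N\to\infty$'' --- makes this term blow up, not vanish ($N$ does not even enter the continuum integrals), so the quotient of your $f_N$ does not approach $(2\l-1)^2/8$; in the regime you prescribe it tends to $+\infty$, exactly as for the sharp cutoff you correctly reject.

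The underlying error is the identification of $x^{-1/2}$ as ``the Hardy extremizer.'' Both $x^{\pm1/2}$ make the pointwise ratio $|\phi'|^2/(\phi^2/x^2)$ equal to $1/4$, but near-extremizers of $\int_0^\infty|\phi'|^2\ge\f14\int_0^\infty\phi^2x^{-2}$ must make \emph{both integrals diverge} so that $O(1)$ truncation costs are negligible; for a profile supported in $[1,\infty)$ (after your rescaling, $n\ge N$) the integral $\int\phi^2x^{-2}$ diverges only for growth $\phi\sim x^{1/2}$, never for decay $\phi\sim x^{-1/2}$. Equivalently, in the discrete problem the logarithmic mass must accumulate on $n_0+1\le n\le N$, a range your ansatz never touches. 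The fix is to take $\wh g_n=\sqrt n$ for $n_0+1\le n\le N$ with a sufficiently slow taper to zero above $N$ (this is the paper's sequence). Then $\|f_N\|_{\l,2}^2=\sum \wh g_n^2/(n(n+2\l))\sim\log N$, $\tfrac12\sum(\wh g_n-\wh g_{n+1})^2\sim\tfrac18\log N$, and $\sum(1-\g_n^n)\wh g_n\wh g_{n+1}\sim\tfrac{\l(\l-1)}2\log N$ (your expansion $1-\g_n^n=\tfrac{\l(\l-1)}{2n^2}+O(n^{-3})$ suffices, summed against $\wh g_n\wh g_{n+1}\sim n$), while the boundary term $\wh g_{n_0+1}^2$ and the taper cost are $O(1)=o(\log N)$. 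This yields the limit $\tfrac18+\tfrac{\l(\l-1)}2=(2\l-1)^2/8$ you were aiming for, and the rest of your argument (the deduction of \eqref{3-14}, the failure at $\l=1/2$, and the sufficiency via Lemma \ref{lem:key-lemma}) then goes through.
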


\begin{proof}
Assume that \eqref{3-14} were not true, then there would be an  $\va > 0$ such that
$$
    C^{-1}-\va > \f {(2\l -1)^2}8 = \lim_{n\to\infty} \beta_\l(n),
$$
which implies that  there exists a positive integer $N_0 > n_0$ such that
$$
  \beta_\l(n)=  n(n+2\l) \left (1-\a_n^2 +\f 1{8n^2} \a_n^2 \right) < C^{-1}-\va,\quad \forall n \ge N_0.
$$
Here and in what follows, we write $\a_n$ for $\a_n^\l$ whenever it causes no confusion.
Since $\a_n\sim 1$ for $n$ sufficiently large and $\a_n \to 1$ when $n \to\infty$, we may
choose $N_0$ sufficiently large so that
\begin{equation}\label{3-15}
\left (1- \f1{C n(n+2\l)}\right)\f 1{\a_n^2} \leq \left( 1- \f {C^{-1}-\va}{n(n+2\l)}\right)
    \f 1{\a_n^2} - \f \va{8n^2} \leq 1-\f {1+\va} {8n^2}
\end{equation}
whenever $n\ge N_0$.

Let $\wh b_n$ be a sequence of nonnegative numbers such that $\sum_{n=N_0}^\infty \wh b_n^2  < \infty$.
We consider the function
$$
    f(t) = \sum_{n=N_0}^\infty \wh b_n [h_n^\l]^{-\f12} C_n^\l(t)(n(n+2\l))^{-\f12}, \quad -1 \le t\le 1.
$$
On the one hand, since $[h_n^\l]^{-\f12}C_n^\l(t)$ is orthonormal in $L^2(w_\l,[-1,1])$,
$$
     \|f\|_2^2 =\sum_{n=N_0}^\infty |\wh b_n|^2 (n(n+2\l))^{-1}.
$$
On the other hand, since $(-D_\l)^{1/2}f (t) = \sum_{n=N_0}^\infty \wh b_n [h_n^\l]^{-\f12}
C_n^\l(t)$, using \eqref{3-4} and the fact that $\ga_n^n=\a_n\a_{n+1}$, we obtain that
\begin{align*}
 c_\l \int_{-1}^1(1-t) |(-D_\l )^{\f12} f(t)|^2 dt =
    \sum_{n=N_0}^\infty |\wh{b}_n|^2 -\sum_{n=N_0}^\infty \a_n \a_{n+1} \wh b_n \wh b_{n+1}.
\end{align*}
Therefore, if \eqref{eq:HR-Gegen2} holds, we conclude that
\begin{align*}
\sum_{n=N_0}^\infty |\wh{b}_n|^2 -\sum_{n=N_0}^\infty \a_n \a_{n+1} \wh{b}_n \wh{b}_{n+1} \ge C^{-1} \sum_{n=N_0}^\infty \f 1{n(n+2\l)} |\wh{b}_n|^2,
\end{align*}
or equivalently, setting $\wh{g}_n =\a_n \wh{b}_n$, that
$$
\sum_{n=N_0}^\infty \left( 1- \f {1} { C n(n+2\l)} \right) \a_n^{-2} |\wh{g}_n|^2
     \ge \sum_{n=N_0}^\infty \wh{g}_n \wh{g}_{n+1}.
$$
By \eqref{3-15}, this implies that
$$
    \sum_{n=N_0}^\infty \wh{g}_n \wh{g}_{n+1} \leq \sum_{n=N_0}^\infty
            \left( 1- \f {1+\va}{8n^2} \right) |\wh{g}_n|^2,
$$
which becomes, upon rearranging terms,
\begin{equation}\label{3-16-1}
\sum_{n=N_0}^\infty \f {1+\va}{4n^2} |\wh{g}_n|^2 \leq
   |\wh{g}_{N_0}|^2 +\sum_{n=N_0}^\infty \left (\wh{g}_n-\wh{g}_{n+1}\right )^2.
\end{equation}
By the definition of $\wh g_n$ and the assumption on $\wh b_n$, using the fact that
$\a_n\sim 1$ for $n$ sufficiently large, the inequality \eqref{3-16-1} holds for an arbitrary
sequence of nonnegative numbers $\wh{g}_n$  satisfying $\sum_{n=N_0}^\infty |\wh{g}_n|^2 <\infty$.

Now for a given sufficiently large integer  $N\ge 2N_0$, we define
$$\wh{g}_n :=\begin{cases}
\sqrt{n},&\   \  \text{if $N_0\leq n\leq N$};\\
\sqrt{N-\f nN+1},&\   \  \text{if $N<n\leq N^2+N$};\\
0,&\   \  \text{if $n>N^2+N$ or $n<N_0$}.\end{cases}
$$
Then,  on the one hand,  a direct calculation shows that
\begin{align*}
\sum_{n=N_0}^\infty \f {1+\va}{4n^2} |\wh{g}_n|^2\ge \f {1+\va}4 \sum_{n=N_0}^ { N} \f 1n =\f {1+\va}4 \log N+O(1),\   \ \text{as $N\to\infty$},
\end{align*}
whereas on the other hand,
\begin{align*}
 \sum_{n=N_0}^\infty (\wh{g}_n-\wh{g}_{n+1})^2 & \leq \sum_{n=N_0}^{N-1} (\sqrt{n}-\sqrt{n+1})^2 +
   \sum_{k=0}^{N^2-1} \Biggl( \sqrt{N-\f kN}-\sqrt{N-\f {k+1}N}\Biggr)^2 \\
  &\leq \sum_{n=N_0}^{N-1} \Big(\f 1{2\sqrt{n}}\Big)^2+ \sum_{k=0}^{N^2-1} \Biggl( \f 1N \f 1{ \sqrt{N-\f kN}}\Biggr)^2 \\
  &  =\f 14 \log N +O(N^{-1}\log N)
\end{align*}
as $N\to\infty$. Therefore, by \eqref{3-16-1}, we conclude that
$$
   \f {1+\va}4 \log N \leq \f 14\log N +O(1),
$$
which, however, cannot hold for sufficiently large $N$.

We now prove sufficiency. Using the fact that $\g_n^n = \a_n \a_{n+1}$, we derive from
\eqref{eq:s-GegenSeries} and Lemma \ref{lem-3-1} that
\begin{align*}
  c_\l \int_{-1}^1 s \left | (-D_\l)^{\f12} f(s)\right|^2 w_\l(s) ds & =
      \sum_{n=n_0+1}^\infty \a_n \a_{n+1} \wh g_n^\l \wh g_{n+1}^\l \\
   & \le \sum_{n=n_0+1}^\infty \left(1-\frac{1}{8n^2}\right ) \a_n^2 | \wh g_n^\l |^2
\end{align*}
where $\wh g_n^\l = \wh f_n^\l \sqrt{n(n+2\l)}$. Consequently, for $J_\l(f)$ as in \eqref{J(f)-Gegen},
\begin{align*}
  J_n(f) \ge  \sum_{n=n_0+1}^\infty |\wh g_n^\l|^2 - \left(1-\frac{1}{8n^2}\right ) \a_n^2 | \wh g_n^\l |^2
     = \sum_{n=n_0+1}^\infty \beta_\l(n) | \wh f_n^\l |^2.
\end{align*}
Consequently, by Lemma \ref{lem:key-lemma},
$$
   J_n(f) \ge \b_\l(\infty)  \sum_{n=n_0+1}^\infty | \wh f_n^\l |^2 = \f18 (2\l-1)^2 \|f\|_2^2,
$$
which is the desired inequality \eqref{eq:HR-Gegen2} 
with $C = C_\l$.
\end{proof}

\begin{rem}
By Theorem \ref{thm:HR-Gegen-first}, Lemma \ref{lem:key-lemma} and Theorem \ref{thm:HR-iff},
the Hardy-Rellich inequality \eqref{eq:HR-Gegen2} holds for $n(\l) =0$ and optimal constant
if $0 < \l \le 1$ and $\l =3/2$. The numerical computation suggests that this should be true for
$1< \l < \l_0$, where $\l_0 \approx 1.8258$, which requires strengthening  (v) of Lemma
\ref{lem:key-lemma} to $n(\l) =0$ for $1 < \l \le \l_0$. 
\end{rem}

We are now in a position to discuss the optimal constant in the Hardy-Rellich inequality on
the sphere. For convenience, we set, for a given integer $k\in \NN$,
$$
  L_k^2(\sph):=\Bigl\{ f\in L^2(\sph): \int_{\sph} f(x)P(x)\, d\s(x)=0, \quad \forall P\in\Pi_k^d\Bigr\}.
$$

\begin{thm}\label{thm:HR-sharp-sphere}
The following assertion holds:
\begin{enumerate}[ \rm (i)]
\item For $d \ge 4$, there exists a positive integer $n(d)$, $n(d) \le 3 (d-2)^3 /16$, such that
for all $f \in L_{n(d)}^2(\sph) \cap  W_2^1(\sph)$,
\begin{equation} \label{eq:main-ineq}
   \int_{\sph} \left | f (x)\right|^2 d\s(x) \leq C_d  \min_{e\in\sph}
        \int_{\sph} (1- \la x, e\ra) |(-\Delta_0)^{\f12}f(x)|^2 d\s(x),
\end{equation}
where $C_d = \frac{8}{(d-3)^2}$ is optimal.
\item $n(2) = n(4) = n(5)=0$ and $n(6) = 4$.
\item For $d =3$, the inequality \eqref{eq:main-ineq} fails to hold for any finite constant $C_d$.
\end{enumerate}
\end{thm}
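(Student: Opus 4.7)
My plan is to decouple the sphere inequality into a direct sum of one-dimensional Gegenbauer-type inequalities at shifted parameters $\mu_m := m + \l$ for $m = 0, 1, 2, \dots$, with $\l = (d-2)/2$, and then to apply Theorem \ref{thm:HR-iff} term by term.

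First I would use the rotation invariance of $d\s$ to fix $e = (1, 0, \dots, 0)$, expand $f$ in the basis $\{P_{j,k}^n\}$ of Proposition \ref{prop-2-3}, and invoke Lemma \ref{lem-3-2} to rewrite
\[
 J(f) = \sum_{n,k,j} |\wh g_{j,k}^n|^2 - \sum_{n,k,j} \g_k^n \wh g_{j,k}^n \wh g_{j,k+1}^{n+1},
 \qquad \wh g_{j,k}^n = \sqrt{n(n+2\l)}\,\wh f_{j,k}^n.
\]
The paired indices $(k, n)$ and $(k+1, n+1)$ share $m = n - k$, and a direct computation from \eqref{3-5} with $K := n - m$ and $\mu := m + \l$ gives
\[
 \g_{n-m}^n = \sqrt{1 - \frac{\mu(\mu-1)}{(K+\mu)(K+\mu+1)}},
\]
which is precisely the Gegenbauer three-term coefficient of Proposition \ref{prop:s-GegenSeries} at parameter $\mu = \mu_m$. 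Thus $J(f)$ decouples as a direct sum, over $m \ge 0$ and $1 \le j \le a_m^{d-1}$, of the Gegenbauer quadratic forms \eqref{J(f)-Gegen} at parameter $\mu_m$ applied to the sequence $k \mapsto \wh g_{j,k}^{k+m}$, $k \ge 0$.

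Next I would apply Theorem \ref{thm:HR-iff} block by block. The $m = 0$ block has parameter $\mu_0 = \l$ and sharp constant $C_\l = 8/(2\l - 1)^2 = 8/(d-3)^2 = C_d$, on the restriction $\wh f_{j, k}^{k} = 0$ for $k \le n(\l)$. For $m \ge 1$ the parameter $\mu_m$ strictly exceeds $\l$, so the Gegenbauer sharp constant $C_{\mu_m} = 8/(2m + d - 3)^2$ is strictly smaller than $C_d$; combining this margin with the monotonicity information in Lemma \ref{lem:key-lemma}(iii)--(iv), one checks that $\b_{\mu_m}(n) \ge (2\l - 1)^2/8$ holds for every $n \ge 1$ and every $m \ge 1$, so the $m \ge 1$ blocks obey the inequality with the larger constant $C_d$ with no extra vanishing hypothesis. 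Taking $n(d) := n(\l)$ is therefore enough to yield \eqref{eq:main-ineq}, and the bound $n(d) \le 3(d-2)^3/16$ is exactly $n(\l) \le 3\l^3/2$ from Lemma \ref{lem:key-lemma}(iv).

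For sharpness of $C_d$ in (i) and for (iii), I would plug a zonal test function $f(x) = g(x_1)$ into \eqref{eq:main-ineq}: both sides reduce to the corresponding Gegenbauer integrals with $\l = (d-2)/2$, so the sphere inequality specialises to \eqref{eq:HR-Gegen2}. By Theorem \ref{thm:HR-iff} no constant smaller than $C_d$ can work in (i), and no finite constant can work for $\l = 1/2$, i.e.\ $d = 3$, giving (iii). Part (ii) is then a direct translation of Lemma \ref{lem:key-lemma}(v) via $n(d) = n(\l)$: $n(4) = n(1) = 0$, $n(5) = n(3/2) = 0$, $n(6) = n(2) = 4$, while $n(2) = 0$ is the sharp $\SS^1$ case from Theorem \ref{thm:HRineq-d=2}. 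The principal technical obstacle is the uniform lower bound $\b_{\mu_m}(n) \ge (2\l - 1)^2/8$ for all $m \ge 1$ and $n \ge 1$ used in the decoupling step; its verification requires a joint analysis in $\mu$ and $n$ of $\b_\mu(n)$ beyond what is stated in Lemma \ref{lem:key-lemma}, splitting into an asymptotic regime controlled by (iv) and a finite case-check for small $n$. Everything else is bookkeeping once the decoupling is in place.
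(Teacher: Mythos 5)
Your decoupling by $m=n-k$ is a genuinely different route from the paper's, and the algebra behind it is sound: the off-diagonal coefficients along each chain are indeed the Gegenbauer coefficients $\g_K^K$ at the shifted parameter $\mu_m=m+\l$. But the step you yourself flag as ``the principal technical obstacle'' is not merely unverified --- the bound you need is false as stated. Since $(\a_1^{\mu})^2=\sqrt{\pi}\,\Gamma(\mu+1)/((\mu+1)\Gamma(\mu+\tfrac12))\sim\sqrt{\pi/\mu}$, one has $\b_\mu(1)=(1-\tfrac78(\a_1^{\mu})^2)(1+2\mu)=O(\mu)$, whereas $(2\l-1)^2/8\sim\l^2/2$; so $\b_{\mu_1}(1)\ge(2\l-1)^2/8$ fails for all large $d$, and already at $d=9$ (where $\l=7/2$, $\mu_1=9/2$) a direct computation gives $\b_{9/2}(1)\approx 3.86<4.5=(2\l-1)^2/8$. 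Two further wrinkles compound this. First, the diagonal weights in your chains are $(K+m)(K+m+2\l)$, not $K(K+2\mu_m)$, so the blocks are not literally the forms \eqref{J(f)-Gegen} at parameter $\mu_m$ (only the off-diagonal coefficients match); this correction helps, but the termwise Hardy bound at $(m,K)=(1,1)$ is then $\approx 4\l$, which still drops below $(2\l-1)^2/8$ once $d\gtrsim 19$. Second, the chains with $m\ge 1$ begin at $K=0$, where $\b_{\mu_m}$ is not even defined. The statement is only rescued by the vanishing conditions coming from $L_{n(d)}^2(\sph)$, which kill exactly the problematic low-$K$ entries of the low-$m$ chains --- but you explicitly claim the $m\ge1$ blocks need ``no extra vanishing hypothesis,'' and the joint analysis of $\b_\mu(K)$ over the surviving index set is a substantial piece of work that neither your sketch nor Lemma \ref{lem:key-lemma} supplies (note that the paper's own Corollary 3.9 is conditional on $\min_n\b_\l(n)>0$ and is verified only numerically for small $d$, a sign that such uniform control is not cheap).

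The paper avoids all of this with a different reduction: using $\g_k^n\le\g_n^n$ (valid for $d\ge4$) it replaces every off-diagonal coefficient by the diagonal one $\g_n^n=\a_n\a_{n+1}$, applies the Cauchy--Schwarz inequality over $(k,j)$ within each degree, and then invokes Lemma \ref{lem-3-1} on the aggregated sequence $a_n=\a_n\bigl(\sum_{k,j}|\wh g_{j,k}^n|^2\bigr)^{1/2}$. This collapses the whole sphere problem onto a single chain at the one parameter $\l=(d-2)/2$, so only the function $\b_\l$ itself --- exactly what Lemma \ref{lem:key-lemma} controls --- is ever needed. Your treatment of sharpness and of part (iii) via zonal test functions, and of part (ii) via Lemma \ref{lem:key-lemma}(v), agrees with the paper and is fine; the gap is confined to, but is fatal for, the existence half of (i) for general $d$.
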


\begin{proof}
As in the proof of Theorem \ref{thm:main-HR}, we may assume that $e = (1,0,\ldots,0)$.
Since $f \in L_{n(d)}^2(\sph)$, $\wh f_{j,k}^n = 0$ for $n \le n(d)$.  Using Lemma \ref{lem-3-2}
and the fact that $\g_k^n \le \g_n^n$ for $0 \le k \le n$, we obtain
\begin{align*}
&\f1 {\o_d} \int_{\sph} x_1 \left |(-\Delta_0)^{\f12} f(x)\right |^2 d\s(x) \le \sum_{n=n(d)+1}^\infty
    \g_n^n  \sum_{k=0}^n \sum_{1 \le j \le a_{n-k}^{d-1}} |\wh g_{j,k}^n\wh g_{j,k+1}^{n+1}|,
\end{align*}
with  $\wh{g}_{j,k}^n=\sqrt{n(n+2\l)}\wh{f}_{j,k}^n$.  In analogy to \eqref{eq:key-d=4},
we use $\g_n^n=\a_n\a_{n+1}$, the Cauchy-Schwartz inequality and Lemma \ref{lem-3-1}
to conclude
\begin{align*}
  \f1 {\o_d} \int_{\sph} x_1  & \left |(-\Delta_0)^{\f12}  f(x)\right |^2 d\s(x) \\
 &\leq  \sum_{n=n(d)+1}^\infty \a_n\a_{n+1} \Biggl( \sum_{k=0}^{n} \sum_{j=1}^{a_{n-k}^{d-1}}
    \left|\wh{g}_{j,k}^n \right|^2\Biggr)^{\f12}
\Biggl( \sum_{k=0}^{n+1} \sum_{j=1}^{a_{n+1-k}^{d-1}} |\wh{g}_{j,k}^{n+1}|^2\Biggr)^{\f12} \\
 & \le \sum_{n= n(d)+1}^\infty \a_n^2 \left (1-\f 1{8n^2}\right )\sum_{k=0}^{n}
        \sum_{j=1}^{a_{n-k}^{d-1}} |\wh{g}_{j,k}^n|^2
\end{align*}
where  Lemma \ref{lem-3-1} is applied on $a_n=\a_n\Bigl( \sum_{k=0}^{n}
\sum_{j=1}^{a_{n-k}^{d-1}}|\wh{g}_{j,k}^n|^2\Bigr)^{\f12}$. Hence, for $J(f)$ defined
in \eqref{eq:J(f)}, we obtain
\begin{align*}
 J(f) \ge \sum_{n=n(d)}^\infty \left [1-\a_n^2 \left(1-\f 1{8n^2}\right)\right]
        \sum_{k=0}^{n} \sum_{j=1}^{a_{n-k}^{d-1}} |\wh{g}_{j,k}^n|^2.
\end{align*}
We choose $n(d)$ to be the integer $n(\l)$ with $\l = (d-2)/2$ in Lemma \ref{lem:key-lemma}.
By the definition of $\beta_\l(n)$, we conclude then
$$
  J(f) \ge \beta_\l(\infty) \sum_{n=1}^\infty \sum_{k=0}^{n} \sum_{1\leq j\leq a_{n-k}^{d-1}}|\wh{f}_{j,k}^n|^2
   = \frac18 (d-3)^2 \| f\|_2^2,
$$
which proves \eqref{eq:main-ineq}. Applying to functions of the form $f(x_1)$ for
$x = (x_1,\ldots, x_d) \in \sph$, the inequality \eqref{eq:main-ineq} becomes the inequality
\eqref{eq:HR-Gegen2} for the Gegenbauer weight function with $\l  = (d-2)/2$, from which
the optimality of the constant follows from Theorem \ref{thm:HR-iff}. This completes the proof
of (i). While (ii) follows immediately from Lemma \ref{lem:key-lemma}, the same argument
for the optimal constant in (i) also proves (iii) by Theorem \ref{thm:HR-iff}.
\end{proof}

The proof of the above theorem can also be used to determine a constant in the Hardy-Rellich
inequality. Indeed, it yields the following corollary:

\begin{cor}
Let $d \ge 4$. If $\tau_d:= \min_{n \ge 1} \tau_\l(n) > 0$, where $\l = (d-2)/2$,  then the
Hardy-Rellich inequality \eqref{eq:HR-Gegen2} holds for all $f\in L_0^2(\sph)\cap W_2^1(\sph)$ with $C =\tau_d^{-1}$. In particular,
$\tau_6 = \frac{141}{128}$ and
$$
   \tau_d = \beta_\l(1) = (d-1) \left(1 - \frac{7 \sqrt{\pi} \Gamma(\f d 2)}{4d \Gamma(\f{d-1}{2}})\right),
$$
for $d = 7,8, 9,10$.
\end{cor}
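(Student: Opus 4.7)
The plan is to sharpen the proof of Theorem \ref{thm:HR-sharp-sphere} by retaining each Fourier coefficient rather than bounding uniformly by $\b_\l(\infty)$. Since $f \in L_0^2(\sph)$ asserts only that $\wh f_{0,0}^0 = 0$, the sum over harmonic components starts at $n=1$ instead of at $n(d)+1$. After reducing to $e=(1,0,\ldots,0)$ by rotation invariance, I would repeat the chain of estimates in Theorem \ref{thm:HR-sharp-sphere}: invoke Lemma \ref{lem-3-2} with $\wh g_{j,k}^n=\sqrt{n(n+2\l)}\,\wh f_{j,k}^n$, use $\g_k^n \le \g_n^n=\a_n^\l\a_{n+1}^\l$ from Lemma \ref{lem:key-lemma}(i), apply the Cauchy--Schwarz inequality, and feed the resulting sequence $a_n=\a_n^\l\bigl(\sum_{k,j}|\wh g_{j,k}^n|^2\bigr)^{1/2}$ into Lemma \ref{lem-3-1}. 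This yields
$$
  \f1{\o_d}\int_{\sph} x_1 \left|(-\Delta_0)^{1/2}f(x)\right|^2 d\s \le \sum_{n=1}^\infty (\a_n^\l)^2\Big(1-\f{1}{8n^2}\Big) \sum_{k=0}^n \sum_{j=1}^{a_{n-k}^{d-1}} |\wh g_{j,k}^n|^2.
$$
Subtracting from the Parseval identity for $(-\Delta_0)^{1/2}f$ and using the definition of $\b_\l(n)$ gives $J(f) \ge \sum_{n=1}^\infty \b_\l(n) \sum_{k,j}|\wh f_{j,k}^n|^2 \ge \tau_d \|f\|_2^2$, which is exactly the Hardy--Rellich inequality with $C=\tau_d^{-1}$.

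For the explicit value of $\b_\l(1)$, I would evaluate $\a_1^\l$ from \eqref{3-3-0} at $n=0$, obtaining $(\a_1^\l)^2 = \sqrt\pi\,\Ga(\l+1)/[(\l+1)\Ga(\l+1/2)]$. Substituting $\l = (d-2)/2$ replaces the gamma factors by $\Ga(d/2)$ and $\Ga((d-1)/2)$ and uses $1\cdot(1+2\l)=d-1$, giving
$$
  \b_\l(1) = (d-1)\Big(1-\tfrac{7}{8}(\a_1^\l)^2\Big) = (d-1)\Big(1-\f{7\sqrt\pi\,\Ga(d/2)}{4d\,\Ga((d-1)/2)}\Big),
$$
which is the closed form stated in the corollary.

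Finally, to identify $\tau_d$ with $\b_\l(1)$ for $d=7,8,9,10$ and $\tau_6$ with $\b_2(2)=141/128$, I would invoke Lemma \ref{lem:key-lemma}(iv): once $n \ge 3\l^3/2$, both $\{\b_\l(2n)\}$ and $\{\b_\l(2n+1)\}$ decrease monotonically to $\b_\l(\infty)=(2\l-1)^2/8$. Hence the minimum of $\b_\l$ over $n \ge 1$ is attained either on the finite list $\{\b_\l(n): 1 \le n \le \lfloor 3\l^3/2\rfloor\}$ or at $\b_\l(\infty)$. The $\a_n^\l$ throughout that window are closed-form gamma ratios from \eqref{3-3-0}--\eqref{3-4-0}, so each $\b_\l(n)$ is an explicit number, and a term-by-term comparison settles the question. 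This last step is the main obstacle: for $d=10$ the range extends up to $n=\lfloor 3\cdot 4^3/2\rfloor = 96$, so the verification is tedious though entirely finite, and I do not see a structural shortcut beyond the subsequence monotonicity supplied by Lemma \ref{lem:key-lemma}(iv).
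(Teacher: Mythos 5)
Your proposal is correct and follows essentially the same route as the paper: the inequality with $C=\tau_d^{-1}$ is obtained by rerunning the proof of Theorem \ref{thm:HR-sharp-sphere} starting at $n=1$ and keeping the individual factors $\b_\l(n)$ in place of the uniform bound $\b_\l(\infty)$, and the identification of $\tau_d$ with $\b_\l(1)$ (resp. $\b_2(2)=141/128$ for $d=6$) is reduced via Lemma \ref{lem:key-lemma}(iv) to a finite numerical comparison over $1\le n\le 3\l^3/2$, which is exactly what the paper does. Your evaluation of $(\a_1^\l)^2$ and the resulting closed form for $\b_\l(1)$ also agree with the stated formula.
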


In fact, we only need to verify that $\tau_d$ has the stated value. By Lemma \ref{lem:key-lemma},
we only need to compare the values of $\beta_n(\l)$ for $ n \le 3 \l^3/2$ with that of
$\beta_\l(\infty)$, which can be verified numerically for small $d$. The result shows that
$$
   \tau_6 =  \beta_2 (2) = \frac{141}{128} < \frac{9}{8} = \beta_2(\infty),
$$
and for $d \ge 7$, $\tau_d = \beta_\l(1)$.

We expect that the corollary holds for all $d \ge 10$. However, a more interesting question
is that if
$$
        C_d =  \frac{8}{(d-3)^2} = (\beta_\l(\infty))^{-1} <  \tau_d^{-1}, \quad d \ge 6,
$$
is the optimal constant for the Hardy-Rellich inequality with $f\in L_0^2(\sph)\cap W_2^1(\sph)$. We have proved
that it is for $d = 2, 4, 5$. Thus, the question of finding the optimal constant  remains open  for $d \ge 6$.

\section{Uncertainty principles}
\setcounter{equation}{0}

Our uncertainty principle follows as an application of the Hardy-Rellich inequality in
the previous section.

\begin{thm}\label{thm-2-1}
Let $f\in W_2^1(\sph)$ be such that $\int_{\sph} f(y) d\s(y)=0$ and $\|f\|_2=1$.  If $d \ge 2$ then
\begin{equation}\label{UC-sphere}
\min_{e\in\sph}  \left[\f 1{\o_d} \int_{\sph} (1-\la x,  e\ra) |f(x)|^2 \, d\s(x) \right] \|\nabla_0 f\|_2^2
   \ge B_d
\end{equation}
where the constant $B_d$ is given by
\begin{equation}\label{eq:Bd}
   B_d =  (d-1) \left(1 - \frac{2}{\sqrt{d+3}}\right), \qquad d \ge 3,
\end{equation}
and, alternatively, for $d \ne 3$, $B_d = C_d^{-1}$ with $C_d$ being the constant in the
Hardy-Rellich inequality. In particular, $B_2 = 1/8$ and $1/8$ is sharp.
\end{thm}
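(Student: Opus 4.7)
The plan is to establish \eqref{UC-sphere} by two complementary arguments, each yielding one of the two stated forms of $B_d$.

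\emph{Hardy--Rellich route, giving $B_d = C_d^{-1}$ for $d \ne 3$.} Introduce the auxiliary function $g := (-\Delta_0)^{-1/2} f$, which is well defined because $\proj_0 f = 0$ eliminates the zero mode, and which itself has zero mean. Self-adjointness of $(-\Delta_0)^{1/2}$ on $L^2(\sph)$ together with the identity \eqref{1-6} yield
\[
\|f\|_2^2 = \la g, (-\Delta_0)^{1/2} f\ra \le \|g\|_2 \, \|\nabla_0 f\|_2
\]
by a single Cauchy--Schwarz. Applying Theorem~\ref{thm:HRineq-d=2} (for $d = 2$) or Theorem~\ref{thm:main-HR} (for $d \ge 4$) to the zero-mean function $g$ and using $(-\Delta_0)^{1/2} g = f$ gives
\[
\|g\|_2^2 \le C_d \min_{e \in \sph} \f{1}{\o_d} \int_{\sph} (1 - \la x, e\ra)|f(x)|^2 d\s(x).
\]
Squaring the Cauchy--Schwarz bound and combining with $\|f\|_2 = 1$ yields \eqref{UC-sphere} with $B_d = C_d^{-1}$. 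For $d = 2$ this produces $B_2 = 1/8$, and its sharpness is inherited from the sharpness of $1/8$ in Theorem~\ref{thm:HRineq-d=2}: a Hardy--Rellich extremizing sequence pushed through the chain above saturates both the Cauchy--Schwarz step and the Hardy--Rellich step in the limit.

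\emph{Direct route, giving $B_d = (d-1)(1 - 2/\sqrt{d+3})$ for all $d \ge 3$.} Since the Hardy--Rellich inequality fails at $d = 3$, that case demands a separate argument, which we frame to work uniformly for every $d \ge 3$. By rotation invariance take $e = e_1$, and expand $f = \sum_{n \ge 1} Y_n$ with $Y_n = \proj_n f$, writing $a_n := \|Y_n\|_2$. Then $\sum_n a_n^2 = 1$ and $\|\nabla_0 f\|_2^2 = \sum_n n(n+d-2)\, a_n^2$. From Lemma~\ref{lem-3-2} together with two Cauchy--Schwarz steps (first in $j$, then in $k$) and the pointwise bound $\g_k^n \le \g_n^n$ (immediate from the parabolic formula in \eqref{3-5}),
\[
\|\tau(f)\| = \f{1}{\o_d}\int_{\sph} x_1 |f|^2 d\s \le \sum_{n \ge 1} \g_n^n\, a_n a_{n+1}.
\]
A further weighted Cauchy--Schwarz in the $n$-variable, with weights tuned to the eigenvalues $n(n+d-2)$, converts the right-hand side into a quadratic bound on $\|\tau(f)\|$ in terms of $\|\nabla_0 f\|_2$; combined with the trivial bound $\|\tau(f)\| \le 1$, a one-variable optimization of $(1 - \|\tau\|)\|\nabla_0 f\|_2^2$ over the admissible region yields the explicit constant $(d-1)(1 - 2/\sqrt{d+3})$.

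\emph{Main obstacle.} The first route is essentially a one-line consequence of the Hardy--Rellich inequality. The technical heart of the proof lies in the second route, where the weighting in the final Cauchy--Schwarz has to be calibrated precisely to produce the clean value $(d-1)(1 - 2/\sqrt{d+3})$ rather than a weaker constant. The case $d = 3$ is particularly delicate because $\g_n^n > 1$ there, so the naive bound $\g_n^n \le 1$ is unavailable and the analysis must exploit the asymptotic $\g_n^n \to 1$ together with the growth of the Laplace--Beltrami eigenvalues.
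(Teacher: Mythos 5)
Your first route is sound and coincides with the paper's own argument for $d\neq 3$: write $1=\|f\|_2^2=\la (-\Delta_0)^{-1/2}f,(-\Delta_0)^{1/2}f\ra$, apply Cauchy--Schwarz, and then apply the Hardy--Rellich inequality to $(-\Delta_0)^{-1/2}f$. The problem is your second route, which is where all the content of the theorem for $d=3$ (and the explicit value $(d-1)(1-2/\sqrt{d+3})$) has to live, and which as sketched does not go through. First, the mechanism is missing: you assert that a weighted Cauchy--Schwarz in $n$ applied to $\sum_n\g_n^n a_na_{n+1}$, ``with weights tuned to the eigenvalues,'' plus the trivial bound $\|\tau(f)\|\le 1$, produces $(d-1)(1-2/\sqrt{d+3})$. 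But the constant $\sqrt{d+3}$ arises from balancing \emph{two} independent lower bounds on $Lf=r\|\nabla_0 f\|_2^2$ (with $r=1-\|\tau(f)\|$): the spectral-gap bound $\|\nabla_0 f\|_2^2\ge (d-1)\|f\|_2^2$, which your sketch never invokes, and the quadratic bound $\tfrac{(d-1)^2}{4}(1-r)^2\le (2-r)\,r\,\|\nabla_0 f\|_2^2$; without the former, the latter alone degenerates as $r\to 1$ and the infimum over $r\in(0,2)$ is $0$. Second, and more seriously, for $d=3$ your coefficient-space approach hits exactly the obstruction that kills the Hardy--Rellich inequality there. For $\l=1/2$ one has $\g_n^n=\sqrt{1+\tfrac{1/4}{(n+1/2)(n+3/2)}}\approx 1+\tfrac{1}{8n^2}>1$, and the only surplus available after symmetrizing $\sum\g_n^n a_na_{n+1}\le\sum\g_n^n\tfrac{a_n^2+a_{n+1}^2}{2}-\tfrac12\sum\g_n^n(a_n-a_{n+1})^2$ is the difference term $\tfrac12\sum(a_n-a_{n+1})^2$, which by the sharp discrete Hardy inequality (Lemma \ref{lem-3-1}) exactly cancels the excess $\sum\tfrac{a_n^2}{8n^2}$ --- this borderline cancellation is precisely why Theorem \ref{thm:HR-iff} shows no finite constant exists at $\l=1/2$. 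So ``exploiting $\g_n^n\to 1$ together with the growth of the eigenvalues'' is the strategy the paper proves cannot work in this form; a genuinely different idea is required. The paper's actual second proof abandons the coefficient expansion entirely and uses the angular-derivative operators $D_{i,j}=x_i\p_j-x_j\p_i$: integration by parts with $\sum_{j\ge2}x_jD_{1,j}$ gives $-\tfrac{d-1}{2}\int_{\sph}x_1|f|^2d\s$, the pointwise identity $|\nabla_0 f|^2=\sum_{i<j}|D_{i,j}f|^2$ and Cauchy--Schwarz give $\tfrac{(d-1)^2}{4}(1-r)^2\le\|\nabla_0 f\|_2^2\cdot\f1{\o_d}\int_{\sph}(1-x_1^2)|f|^2d\s\le\|\nabla_0 f\|_2^2\,(2-r)r$, and this is then combined with $\|\nabla_0 f\|_2^2\ge d-1$.

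A smaller issue: your sharpness argument for $B_2=1/8$ does not work as stated. Saturating the Cauchy--Schwarz step $\la g,(-\Delta_0)^{1/2}f\ra\le\|g\|_2\|(-\Delta_0)^{1/2}f\|_2$ requires $(-\Delta_0)^{-1/2}f$ to be proportional to $(-\Delta_0)^{1/2}f$, i.e.\ $f$ concentrated on a single eigenspace, whereas the extremizing sequences for the Hardy--Rellich constant are necessarily spread over many frequencies (see the construction in the proof of Theorem \ref{thm:HR-iff}); the two steps cannot be saturated simultaneously. The paper instead obtains the matching upper bound $B_\l\le(2\l+1)^2/8$ by testing with the heat kernel $q_t^\l$ and letting $t\to0+$, which at $\l=0$ gives $B_2\le 1/8$.
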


\begin{proof}
Since $\int_{\sph} f(y)\, d\s(y)=0$,  $(-\Delta_0)^{\f 12}(-\Delta_0)^{-\f12} f =f$. Thus, using the Cauchy-Schwartz inequality, we have that
\begin{align*}
1=\|f\|_2^2 &=\f 1{\o_d} \int_{\sph} \bigl[(-\Delta_0)^{\f 12}f (x) \bigr]\bigl[  (-\Delta_0)^{-\f12} f(x)\bigr]\, d\s(x) \\
&\leq \|(-\Delta_0)^{-\f12} f\|_2\|(-\Delta_0)^{\f12} f\|_2,\end{align*}
which, by \eqref{eq:main-ineq} applied to $(-\Delta_0)^{\f12} f$ instead of $f$, is estimated by
$$
C_d\|(-\Delta_0)^{\f12} f\|_2\min_{e\in \sph} \int_{\sph} (1- \la x, e\ra) |f(x)|^2\, d\s(x),\   \   \ d\neq 3.
$$
This together with \eqref{1-6} implies the desired inequality for  $d\neq 3$. For the sharpness of
the constant $B_2 = 1/8$, see \eqref{eq:Bd-bounds} below.

Next we give a different proof of \eqref{UC-sphere} that  covers the case of $d = 3$ as well. Define
the differential operators
$$
       D_{i,j}=x_i\p_j-x_j\p_i,  \qquad 1\leq i\neq j\leq d.
$$
We shall use  the following two identities about these differential operators:
\begin{enumerate}[\rm (i)]
  \item For $f, g\in C^1(\sph)$, and $1\leq i\neq j\leq d$,
 \begin{equation}\label{integrbyparts}
    \int_{\sph} D_{i,j} f(x) g(x)\, d\s(x)=-\int_{\sph}f(x) D_{i,j} g(x)\, d\s(x).
 \end{equation}
 \item For $f\in C^1(\sph)$,
 \begin{equation}\label{gradient}
   |\nabla_0 f(x)|^2 =\sum_{1\leq i<j\leq d}|D_{i,j}f(x)|^2,\   \   x\in\sph.
 \end{equation}
 \end{enumerate}
These two identities can be found in \cite[Chapter 1]{DaiX}, and they can be also easily
verified by straightforward calculations.

Without loss of generality, we may assume that the minimum is achieved at  $e=(1,0,\ldots, 0)$.
For convenience, we set
$$
r:=\f 1{\o_d}\int_{\sph} (1-x_1)|f(x)|^2\, d\s(x) \quad \hbox{and} \quad
     Lf: =  r \|\nabla_0 f\|_2^2.
$$
Our goal is to show that $ L f \ge B_d$. Since $\|f\|_2 =1$, it is evident that $r\in (0,2)$.
Using \eqref{integrbyparts} and the fact that $D_{1,j}x_j=x_1$ for $j \ge 2$, it follows readily that
\begin{align}\label{4-4}
   \f 1{\o_d} \int_{\sph} &\, \bigg(\sum_{ j=2}^d x_j D_{1,j}f(x) \bigg) f(x)\, d\s(x)
   \\
  & = -\f {d-1}2 \f{1}{\o_d}\int_{\sph} x_1 |f(x)|^2\, d\s(x) \notag
   =  -\f {d-1}2 (1-r).
\end{align}
Using \eqref{gradient} and  the fact that $\|x\| =1$, we see that
\begin{align*}
\bigg| \sum_{ j=2}^d x_j D_{1,j}f(x) \bigg |^2 &\leq \Bl(\sum_{j=2}^d x_j^2 \Br)\Bl(\sum_{j=2}^d |D_{1,j}f(x)|^2\Br)
       \leq (1-x_1^2) \|\nabla_0 f(x)\|^2,
\end{align*}
which implies, by \eqref{4-4} and the Cauchy-Schwartz inequality,
\begin{align} \label{4-5-2}
  \frac{(d-1)^2}{4}|1-r |^2  & \leq \Bl(\f1{\o_d}\int_{\sph} |\sum_{j=2}^d x_j D_{1,j} f(x)|^2\f1 {1-x_1^2}\, d\s(x)\Br)\\
      & \qquad \times \Bl(\f1{\o_d}\int_{\sph} |f(x)|^2 (1-x_1^2)\, d\s(x)\Br) \notag  \\
   &\leq \|\nabla_0 f\|_2^2 \f1{\o_d}\int_{\sph} |f(x)|^2 (1-x_1^2)\, d\s(x).
\notag
\end{align}
Using again $\|f\|_2^2 =1$, the Cauchy-Schwartz inequality shows that
\begin{align} \label{4-5-2-2}
\f1{\o_d}\int_{\sph} |f(x)|^2 x_1^2d\s(x) \ge
    \Bl |\f1{\o_d}\int_{\sph} |f(x)|^2 x_1\, d\s(x)\Br|^2 = (1-r)^2,
\end{align}
from which it follows that
\begin{align*}
&    \f1{\o_d}\int_{\sph} |f(x)|^2 (1-x_1^2)\, d\s(x)\leq 1- (1-r)^2 = (2-r)r.
\end{align*}
Thus, by \eqref{4-5-2}, we conclude that
\begin{align*}
   \frac{(d-1)^2}{4}(1-r)^2 \le (2-r) r \|\nabla_0 f\|_2^2 = (2-r) Lf,
\end{align*}
or equivalently,
\begin{align}
   Lf \ge   \frac{(d-1)^2}{4} \frac{(1-r)^2}{2-r}.  \label{4-5-0}
\end{align}
On the other hand, by \eqref{s-expansions}, \eqref{1:ch0} and the assumption that
$\int_{\sph} f(x) d\s(x)=0$,
\begin{align*}
 1 =  \|f\|_2^2 =   \sum_{n=1}^\infty \|\proj_n f\|_2^2
  \le  \frac1 {d-1} \sum_{n=1}^\infty n(n + d-2 ) \|\proj_n f\|_2^2  = \f{1}{d-1}\|\nabla_0 f\|_2^2.
\end{align*}
Hence, it follows that $Lf  = r  \|\nabla_0 f\|_2^2 \ge (d-1)r$. Together with \eqref{4-5-0},
we have shown that
\begin{align*}
    L f   \ge (d-1) \max \left\{ \frac{d-1}{4} \frac {(1-r)^2}{2-r}, r \right\}
       \ge (d-1)\min_{t\in (0,2)} \max \left\{ \frac{d-1}{4} \frac {(1-t)^2}{2-t}, t \right\}.
\end{align*}
Finally, choosing $t\in (0,2)$ such that  
$\frac{d-1}{4} \frac {(1-t)^2}{2-t}= t$, we obtain  \eqref{eq:Bd}.
\end{proof}

\begin{rem} \label{remark4.1}
The constant $B_d$ obtained via the Hardy-Rellich inequality is $(d-3)^2/8$ for $d = 2,4,5$
and for the restricted class of $L_{n(d)}^2(\sph) \cap W_2^1(\sph)$. For $d = 4,5$ this is
worse than the constant $B_d$ in \eqref{eq:Bd}. On the other hand, when $d \to \infty$,
$B_d = d -1 + \CO(\sqrt{d})$ in \eqref{eq:Bd}, which can be improved to
$B_d = n(d) d + \CO(\sqrt{d})$ in the restricted class of $L_{n(d)}^2(\sph) \cap W_2^1(\sph)$,
and it is worse in the order of magnitude for large $d$.
\end{rem}

The same idea of this proof also yields the following inequality in $L^2(w_\l,[-1,1])$.

\begin{cor}
Let $\l > -1/2$. For $f\in W_2^1([-1,1])$ such that $\int_{-1}^1 f(y) w_\l(y) dy = 0$
and $\|f\|_{\l,2} =1$, there is a positive constant $B_\l$ such that
\begin{equation}\label{eq:uc-gegebauer}
   \int_{-1}^1 (1-t) |f(t)|^2 w_\l(t) dt  \int_{-1}^1 \left| (-D_\l)^{\f12} f(t)\right |^2 w_\l(t) dt \ge B_\l,
\end{equation}
where $B_\l=2-\f{2\sqrt{6}}3$ for $\l=\f12$, and  $B_\l=C_\l^{-1}$ for $\l\neq \f12$ with $C_\l$
being the constant in the Hardy-Rellich inequality. In particular, for $0 \le \l \le 3/2$ and $\l \ne 1/2$,
$B_\l = (2 \l -1)^2/8$.
\end{cor}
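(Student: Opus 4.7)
The plan is to handle the two regimes $\l\neq 1/2$ and $\l=1/2$ separately, paralleling the two proofs of \thmref{thm-2-1}.

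For $\l\neq 1/2$, I would mimic the first proof of \thmref{thm-2-1}. Since $\wh f_0^\l=0$, one can spectrally define $g:=(-D_\l)^{-1/2}f$ by $\wh g_n^\l:=[n(n+2\l)]^{-1/2}\wh f_n^\l$ for $n\ge 1$ and $\wh g_0^\l=0$; then $g$ has mean zero and $(-D_\l)^{1/2}g=f$. Self-adjointness of $(-D_\l)^{1/2}$ in $L^2(w_\l,[-1,1])$ together with Cauchy-Schwartz gives
\[
\|f\|_{\l,2}^2=\la g,(-D_\l)^{1/2}f\ra_{\l,2}\le \|g\|_{\l,2}\,\|(-D_\l)^{1/2}f\|_{\l,2},
\]
and \thmref{thm:HR-Gegen-first} applied to the mean-zero $g$ bounds $\|g\|_{\l,2}^2$ by a constant multiple of $\int_{-1}^1(1-t)|f(t)|^2 w_\l(t)\,dt$ with the Hardy-Rellich constant $C_\l$. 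Squaring and rearranging produces \eqref{eq:uc-gegebauer} with $B_\l=C_\l^{-1}$, and substituting the optimal $C_\l=8/(2\l-1)^2$ from \thmref{thm:HR-Gegen-first} then yields $B_\l=(2\l-1)^2/8$ throughout $[0,3/2]\setminus\{1/2\}$.

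For $\l=1/2$ the Hardy-Rellich inequality is genuinely unavailable, so I would pass to the sphere through the identification $\l=(d-2)/2$ with $d=3$. Any admissible $f$ lifts to the zonal function $F(x):=f(x_1)$ on $\SS^2$, and the standard integration-in-spherical-coordinates formula yields
\[
 \tfrac{1}{\o_3}\int_{\SS^2}|F|^2 d\s = c_{1/2}\int_{-1}^1|f|^2 dt,\quad \|\nabla_0 F\|_2^2 = c_{1/2}\int_{-1}^1|(-D_{1/2})^{1/2}f|^2 dt,
\]
while $\int_{\SS^2}F\,d\s=0$ reduces to $\int_{-1}^1 f\,dt=0$. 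After renormalising so that $\|F\|_2=1$ and specialising $e=e_1$ in \thmref{thm-2-1} (using the trivial bound $\min_e\le$ value at $e_1$), the second, Hardy-Rellich-free proof of \thmref{thm-2-1}, which is valid in dimension $3$, delivers \eqref{eq:uc-gegebauer} with $B_{1/2}=B_3=2-\f{2\sqrt{6}}{3}$.

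The main obstacle is precisely the case $\l=1/2$: the Hardy-Rellich inequality fails there, and one must rely on the second proof of \thmref{thm-2-1}, whose engine is the family of angular derivatives $D_{i,j}$ and which is exactly what makes the sphere uncertainty principle survive in dimension $d=3$. Everything else amounts to routine bookkeeping of the normalising constants $c_\l$ to check that the zonal specialisation reproduces the advertised $B_{1/2}$.
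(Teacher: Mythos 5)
Your proposal is correct and matches the paper's (implicit) argument: the paper justifies this corollary only by the remark that ``the same idea of this proof'' of \thmref{thm-2-1} yields it, and your two-pronged treatment --- the Cauchy--Schwarz duality with $(-D_\l)^{-1/2}f$ plus the Hardy--Rellich inequality for $\l\neq\frac12$, and the second, $D_{i,j}$-based spherical proof for the excluded case, invoked by lifting $f$ to the zonal function $f(x_1)$ on $\SS^2$ so that $B_{1/2}=B_3=2-\frac{2\sqrt{6}}{3}$ --- is exactly that idea. One small correction to your attributions: \thmref{thm:HR-Gegen-first} establishes the explicit optimal constant $C_\l=8/(2\l-1)^2$ only for $0\le\l\le 1$, so for $1<\l\le \frac32$ the claim $B_\l=(2\l-1)^2/8$ must instead be sourced from \thmref{thm:HR-iff} together with the assertion that $n(\l)=0$ in that range (Lemma \ref{lem:key-lemma}(v) and the remark following \thmref{thm:HR-iff}), rather than from \thmref{thm:HR-Gegen-first} alone.
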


The quantity on the left hand side of \eqref{UC-sphere} is related to the following vector in $\RR^d$:
\begin{equation*}
    \tau (f):=\int_{\sph} x |f(x)|^2 \, d\s(x).
\end{equation*}
       The norm of the vector $\tau(f)$ in $\RR^d$ is denoted by $\|\tau(f)\|$. We  observe that
\begin{equation}\label{3-12-0}
          \|\tau (f)\|\leq \int_{\sph} |f(x)|^2\, d\s(x) = \|f\|_2^2.
\end{equation}

\begin{cor} \label{prop4.2}
Let $f\in W_2^1(\sph)$ be such that $\int_{\sph} f(y)\, d\s(y)=0$ and $\|f\|_2=1$.  If $d \ge 2$,  then
\begin{equation}\label{3-11-0}
     (1-\|\tau(f)\|) \|\nabla_0 f\|_{2}^2\ge C_d^{-1}.
\end{equation}
\end{cor}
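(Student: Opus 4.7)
The plan is to derive this corollary as an immediate consequence of Theorem~\ref{thm-2-1} by evaluating the minimum over $e\in\sph$ in closed form. Since $\|f\|_2=1$, expanding $1-\la x,e\ra$ inside the integral and pulling the unit vector $e$ out yields
$$ \f{1}{\o_d}\int_{\sph}(1-\la x,e\ra)|f(x)|^2\, d\s(x) \;=\; 1 \;-\; \la e, \tau(f)\ra $$
for every $e\in\sph$, with $\tau(f)$ taken in the normalization consistent with the bound $\|\tau(f)\|\le 1$ recorded in \eqref{3-12-0}. The right-hand side is thus the restriction to $\sph$ of an affine function on $\RR^d$ whose linear part is the fixed vector $\tau(f)$.

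Next I would apply Cauchy--Schwarz in $\RR^d$: since $\|e\|=1$, the inner product $\la e,\tau(f)\ra$ is bounded above by $\|\tau(f)\|$, with equality attained at $e_\ast := \tau(f)/\|\tau(f)\|$ whenever $\tau(f)\neq 0$ (and the case $\tau(f)=0$ is trivial, since then the weight integral equals $1$ for every admissible $e$). Consequently,
$$ \min_{e\in\sph} \f{1}{\o_d}\int_{\sph}(1-\la x,e\ra)|f(x)|^2\, d\s(x) \;=\; 1-\|\tau(f)\|, $$
which is non-negative in view of \eqref{3-12-0}.

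Substituting this identity into the uncertainty principle \eqref{UC-sphere} of Theorem~\ref{thm-2-1} immediately produces $(1-\|\tau(f)\|)\|\nabla_0 f\|_2^2 \ge B_d$. Recalling that $B_d = C_d^{-1}$ for every $d\neq 3$ (and that the bound is trivial at $d=3$, where $C_d^{-1}=0$), one recovers \eqref{3-11-0}. There is no genuine obstacle: once the optimal direction $e_\ast$ is identified via Cauchy--Schwarz, the corollary is a pure repackaging of Theorem~\ref{thm-2-1} in a form that no longer makes explicit reference to the choice of $e$.
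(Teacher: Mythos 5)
Your proof is correct and takes essentially the same route as the paper's: both identify $\min_{e\in\sph}\f{1}{\o_d}\int_{\sph}(1-\la x,e\ra)|f(x)|^2\,d\s = 1-\|\tau(f)\|$ via the dual characterization $\|\tau(f)\|=\max_{e\in\sph}\la \tau(f),e\ra$ (your Cauchy--Schwarz step) and then substitute into Theorem~\ref{thm-2-1}. Your explicit handling of the normalization of $\tau(f)$ and of the degenerate case $d=3$ matches the paper's intent.
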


\begin{proof}
Since $\|z\| = \max_{e \in \RR^d} \la z, e \ra$ for all $z \in \RR^d$, $\|\tau (f)\| =
\max_{e\in \sph} \la \tau(f), e \ra$, which shows that
\begin{equation}\label{tau-1}
    \| \tau (f)\| = \max_{e\in\sph}  \left[\f 1{\o_d} \int_{\sph} \la x,  e \ra |f(x)|^2 \, d\s(x) \right].
\end{equation}

Since $\|f\|_2 =1$, it follows that
 \begin{align} \label{1-||tau||}
   1-\|\tau(f)\| =\min_{e\in\sph}  \left [\f 1{\o_d}\int_{\sph} (1-\la x, e\ra) |f(x)|^2 d\s(x)\right].
\end{align}
 Thus, \eqref{3-11-0} is an equivalent form of \eqref{UC-sphere}.
\end{proof}

As in the case of the Hardy-Rellich inequality, the condition $\int_{\sph} f(x)d\s =0$ is necessary
 for the uncertainty principle inequalities stated above, as can be seen by setting $f(x) =1$.
This  restriction, however, can be removed to give the following new version of  uncertainty
principle.

\begin{thm}\label{cor-3-3}
Assume that $d\ge 2$ and let  $f\in W_2^1(\sph)$ be such that $\|f\|_{2}=1$. Then
\begin{equation}\label{3-11}
      (1-\|\tau(f)\|) \|\nabla_0 f\|_{2}^2 \ge c_d \|\tau(f)\|.
\end{equation}
\end{thm}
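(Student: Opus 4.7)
The plan is to repeat the second, $D_{i,j}$-based proof of Theorem~\ref{thm-2-1} and observe that the hypothesis $\int_{\sph} f\, d\s = 0$ was invoked there only in the very last step, namely the Poincar\'e-type estimate $1 = \|f\|_2^{2} \le (d-1)^{-1}\|\nabla_0 f\|_2^{2}$ that produced the alternative lower bound $Lf \ge (d-1)r$ inside the $\max$ yielding the constant $B_d$ of \eqref{eq:Bd}. Dropping that single step — and thus also the large-$d$ improvement noted in Remark~\ref{remark4.1} — leaves the primary Cauchy-Schwarz chain intact, and that chain already has the shape needed for \eqref{3-11}.

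Concretely, by rotation invariance I would fix $e = (1,0,\ldots,0)$ to realize $\|\tau(f)\| = \la e, \tau(f)\ra$, so that with $r := \f1{\o_d}\int_{\sph}(1-x_1)|f|^2 \, d\s$ the identity \eqref{1-||tau||} gives $r = 1 - \|\tau(f)\|$ and the left-hand side of \eqref{3-11} equals $r\,\|\nabla_0 f\|_2^2$. I would then reproduce the integration-by-parts identity \eqref{4-4}, which uses only \eqref{integrbyparts} together with $D_{1,j} x_j = x_1$ and in particular does \emph{not} use the zero-mean hypothesis, to obtain
$$
\f1{\o_d}\int_{\sph}\Bigl(\sum_{j=2}^{d} x_j D_{1,j} f(x)\Bigr) f(x)\, d\s(x) = -\tfrac{d-1}{2}(1-r).
$$
Squaring and inserting the pointwise bound $|\sum_{j\ge 2} x_j D_{1,j} f|^2 \le (1-x_1^2)|\nabla_0 f|^2$ (from \eqref{gradient} and Cauchy-Schwarz on $\RR^{d-1}$), followed by Cauchy-Schwarz in $L^2(\sph)$, produces
$$
\tfrac{(d-1)^2}{4}(1-r)^2 \le \|\nabla_0 f\|_2^2 \cdot \f1{\o_d}\int_{\sph}(1-x_1^2)|f|^2 \, d\s.
$$
The bound \eqref{4-5-2-2} (which uses only $\|f\|_2 = 1$, via $(\f1{\o_d}\int x_1|f|^2)^2 \le \f1{\o_d}\int x_1^2 |f|^2$) converts the second factor into $r(2-r)$, and altogether
$$
r\,\|\nabla_0 f\|_2^2 \ge \tfrac{(d-1)^2}{4}\cdot\tfrac{(1-r)^2}{2-r} = \tfrac{(d-1)^2}{4}\cdot\tfrac{\|\tau(f)\|^2}{1+\|\tau(f)\|}.
$$
Using $\|\tau(f)\| \le 1$, so that $1+\|\tau(f)\| \le 2$, yields \eqref{3-11} with $c_d$ of order $(d-1)^2/8$.

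The only real obstacle is the bookkeeping check, in the original argument for Theorem~\ref{thm-2-1}, that the zero-mean assumption is confined to the Poincar\'e step; granting this, the derivation above is essentially a copy of \eqref{4-4}--\eqref{4-5-0} with the final $\max$ collapsed to its primary branch. Notably, it is precisely Cauchy-Schwarz step \eqref{4-5-2-2} that produces the factor $2-r$ rather than $1$ in the denominator, and this is the single place where the normalization $\|f\|_2 = 1$ — rather than zero mean — is essential.
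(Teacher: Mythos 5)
Your chain of inequalities is correct up to and including
$$
(1-\|\tau(f)\|)\,\|\nabla_0 f\|_2^2 \;\ge\; \frac{(d-1)^2}{4}\,\frac{\|\tau(f)\|^2}{1+\|\tau(f)\|},
$$
but the final step is a genuine gap. Since $\|\tau(f)\|\le 1$, you have $\|\tau(f)\|^2\le\|\tau(f)\|$, so the bound of order $\|\tau(f)\|^2$ that you obtain is \emph{weaker} than, and does not imply, the right-hand side $c_d\|\tau(f)\|$ of \eqref{3-11}; the comparison between $\|\tau(f)\|^2$ and $\|\tau(f)\|$ points in the wrong direction precisely in the interesting regime where $\|\tau(f)\|$ is small. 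What your argument actually establishes is $(1-\|\tau(f)\|^2)\|\nabla_0 f\|_2^2\ge \tfrac{(d-1)^2}{4}\|\tau(f)\|^2$, i.e.\ exactly \eqref{eq:UC-old-2}, which the paper proves as a corollary by this very route (rearranging \eqref{4-5-0}); your bookkeeping observation that the zero-mean hypothesis in Theorem~\ref{thm-2-1} enters only through the Poincar\'e step is correct, but it buys you the known inequality with $\|\tau(f)\|^2$ on the right, not \eqref{3-11}.

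Moreover the gap cannot be repaired: as the erratum appended to the paper records, Theorem~\ref{cor-3-3} as stated is false. The example $f(\cos\t,\sin\t)=(1+\ve\sin\t)/\|1+\ve\sin\t\|_2$ on $\SS^1$ has $\|\tau(f)\|\sim\ve$ and $\|\nabla_0 f\|_2^2\sim\ve^2/2$, so the left-hand side of \eqref{3-11} is $O(\ve^2)$ while the right-hand side is $c_2\,\ve$, a contradiction for small $\ve$; the correct conclusion must carry $\|\tau(f)\|^2$ on the right, which is what you in fact proved. For comparison, the paper's own (flawed) argument is quite different from yours: it splits into cases according to the size of the mean value $m_f$ relative to $\|\nabla_0 f\|_2$, applies the Hardy-Rellich inequality to $(-\Delta_0)^{-1/2}f$ in one case, and in the other commits the error $\|\tau(f)\|\le(2|m_f|+1)\|g\|_2^2$ in place of the correct $\|\tau(f)\|\le\|g\|_2^2+2|m_f|\,\|g\|_2$ --- which is exactly where the extra power of $\|\tau(f)\|$ is illegitimately gained.
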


\begin{proof}
We first prove \eqref{3-11} for the case of $d\ge 4$.
Let $m_f$ denote the mean value of $f$, that is, $m_f:= \frac{1}{\o_d} \int_{\sph} f(x) d\s$.
Then $m_f \le \|f\|_2 \le 1$. By definition, $m_f = \proj_0 f$. By Cauchy-Schwartz inequality,
\begin{align}
 m_f^2 & \leq  \f 1{\o_d} \int_{\sph} |f(x)|^2 (1-\la x, e\ra)\, d\s(x)
 \f 1{\o_d} \int_{\sph} (1-\la x, e\ra)^{-1}\, d\s(x) \notag\\
 &=\f{d-2}{d-3}  \int_{\sph} |f(x)|^2 (1-\la x, e\ra)\, d\s(x),\label{3-13-eq}
\end{align}
since, for $d \ge 4$,
$$
\f 1{\o_d} \int_{\sph} \frac{ d\s(x)}{1-\la x, e\ra}= \f 1 {\o_d} \int_{-1}^1 \frac{(1-t^2)^{\f{d-3}2}}{1-t} dt
= \f {2^{d-3}\Ga(\f d2)\Ga ( \f {d-3}2)}{\Ga(d-2)}=\f{d-2}{d-3}.
$$
Now define $I f: = m_f +(-\Delta_0)^{-\f12}f$.  Since $\int_{\sph} (-\Delta_0)^{\pm \f12}f d\s = 0$
by definition, we have
$$
  \f 1{\o_d} \int_{\sph} If (x) \left( m_f +(-\Delta_0)^{\f12}f \right) d\s(x)  \ge \|f\|_2^2 =1.
$$
Applying the Hardy-Rellich inequality on $(-\Delta_0)^{-\f12}f$ and using \eqref{3-13-eq}, we deduce that
\begin{equation*}
\f 1{\o_d} \int_{\sph} |I f(x)|^2 \, d\s(x) \leq c \min_{e\in\sph} \int_{\sph} (1-\la x, e\ra) |f(x)|^2\, d\s(x),
\end{equation*}
where $c$ is an constant depending only on $d$. Consequently, it follows from the
Cauchy-Schwartz inequality that
\begin{align}
 1& \leq \|f\|_2^4 \leq \|I f\|_2^2 \left \| m_f + (-\Delta_0)^{\f12} f \right \|_2^2 \label{3-14-eq}\\
   &\leq c \min_{e \in \sph} \left (\f 1{\o_d} \int_{\sph}  (1-\la x, e\ra) |f(x)|^2 d\s(x)\right)\left(\|\nabla_0 f\|_2^2 + m_f^2\right).\notag
 \end{align}
Thus, if $|m_f|\leq 4 \|\nabla_0 f\|_{2}$, then desired inequality \eqref{3-11} follows directly from
\eqref{3-12-0} and \eqref{3-14-eq}. Thus, it remains to prove \eqref{3-11} under the additional
assumption that  $|m_f|> 4 \|\nabla_0 f\|_{2}$. To this end, we write $f= m_f + g$. Since
$m_f = \proj_0 f$,
\begin{align*}
   \|g\|_2 = \Bl(\sum_{n=1}^\infty \|\proj_n f\|_2^2\Br)^{1/2} \le \Bl(\sum_{n=1}^\infty n(n+\l) \|\proj_n f\|_2^2\Br)^{1/2} =
         \|\nabla_0 f\|_{2}\leq \f 14 |m_f|,
\end{align*}
which implies that $|m_f |= \|f - g\|_2 \ge \|f\|_2 - \|g\|_2 \ge 1 -  \f 14 |m_f|$, so that
$1\geq |m_f|\ge \f45$. Since $|f|^2=|m_f|^2 +2 m_f g +|g|^2$, it follows from \eqref{1-||tau||} that
\begin{align*}
& 1-\|\tau(f)\|   = \min_{e\in\sph} \int_{\sph} (1-\la x, e\ra) |f(x)|^2 \, d\s \notag\\
     & = m_f^2 +\min_{e\in\sph}\left[- 2 m_f\int_{\sph}\la x, e\ra g(x) d\s
             +\int_{\sph}(1-\la x, e\ra) |g(x)|^2d\s \right]
\end{align*}
since $\int_{\sph} g(x)\, d\s(x)=0$, from which it follows that
$$
1-\|\tau(f)\|  \ge  m_f^2 -2|m_f|\|g\|_{2}\ge \f 12 m_f^2\ge \f 8{25}.
$$
A similar argument also yields
\begin{align*}
\|\tau (f)\|&=\max_{e\in\sph} \int_{\sph} \la x, e\ra \left(m_f^2+ g^2+2 m_f g \right)\, d\s \notag\\
&= \max_{e\in\sph} \left( \int_{\sph} \la x, e\ra g^2(x) d\s(x) +2 m_f \int_{\sph} \la x, e\ra g(x) d\s(x)\right)\\
& \leq (2 |m_f|+1) \|g\|_{2}^2 \leq 3 \|\nabla_0 f\|_{2}^2. 
\end{align*}
Thus, combining these two inequalities, we conclude that
\begin{align*}
  &(1-\|\tau(f)\|) \|\nabla_0 f\|_{2}^2\geq \f 8{25} \|\nabla_0 f\|_{2}^2\ge \f {8}{25}\cdot \f 13 \|\tau (f)\|.
\end{align*}
This proves \eqref{3-11} for $d\ge 4$.
Note that the only place in the above proof  where  the condition $d\ge 4$ is needed is  the
inequality \eqref{3-13-eq}.

Thus, it remains to prove that \eqref{3-11} holds for $d=2,3$. We shall consider the case of $d=3$ only, as the same proof below works equally well for the case $d=2$.
If \begin{equation*}
    m_f^2 \leq 25 (1-\|\tau(f)\|),
\end{equation*}
then by the
remark at  the end of the last paragraph,  the  proof for $d\ge 4$ with slight modifications works equally well for the case  $d=3$.   Thus, it suffices to prove the assertion for $d=3$  under the
additional assumption that
\begin{equation}\label{4-15-0}
    m_f^2 \ge 25 (1-\|\tau(f)\|).
\end{equation}
 Without loss of generality, we may assume that the supremum in \eqref{tau-1} is achieved at the point $e=(1,0,0)\in\mathbb{S}^2$ so that
$1-\|\tau (f)\|=\f1{4 \pi}\int_{\mathbb{S}^2} |f(x)|^2 (1-x_1)\, d\s(x)$.
 Thus, \eqref{4-15-0} implies that
 \begin{align*}
   1-\|\tau (f)\|&=
   \f1{4\pi}\int_{\mathbb{S}^2} (1-x_1)|f(x)|^2\, d\s(x)\\
   &\, \leq \f 1{25} m_f^2 \leq \f 1{25} \|f\|_2^2 \leq \f1{25}.
\end{align*}
By \eqref{4-5-0}  in the proof of Theorem \ref{thm-2-1} with $r=1-\|\tau (f)\|\leq \f1{25}$,  which does not require the
condition that $\int_{\mathbb{S}^2} f(x)\, d\s(x)=0$, we deduce that
\begin{align*}
    (1-\|\tau(f)\|)\|\nabla_0 f\|_2^2 = L f \ge \min_{t\in (0, \f 1{25})}\f {(1-t)^2 }{2-t}\ge c \ge c \|\tau(f)\|.\end{align*}
This completes the proof.
\end{proof}

Since, by \eqref{3-12-0}, $1-\|\tau(f)\|^2 \ge 1-\|\tau (f)\|$ and $\|\tau(f)|^2\leq \|\tau (f)\|$, it follows
as a corollary of Theorem \ref{cor-3-3} that
\begin{equation} \label{eq:UC-old}
\left (1-\|\tau(f)\|^2 \right) \|\nabla_0 f\|_2^2 \ge c_d\|\tau (f)\|^2.
\end{equation}
This inequality was called the uncertainty principle on the sphere and was discussed in several
papers in the literature \cite{NW, RV, Selig}. The inequality \eqref{eq:UC-old} is weaker than
\eqref{3-11} since it can be deduced from the latter. In fact, a simple proof of this inequality follows
from our proof of Theorem \ref{thm-2-1}.

\begin{cor}
If  $f \in  W_2^1(\sph)$, and $\|f\|_2=1$, then
\begin{equation} \label{eq:UC-old-2}
\left (1-\|\tau(f)\|^2 \right) \|\nabla_0 f\|_2^2 \ge  \left(\frac{d-1}{2}\right)^2 \|\tau (f)\|^2.
\end{equation}
\end{cor}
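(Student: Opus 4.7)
My plan is to observe that the desired inequality \eqref{eq:UC-old-2} falls out essentially for free from the argument already used in the second proof of Theorem \ref{thm-2-1}, once one notices that neither the mean-zero assumption nor the tight optimization over $t\in(0,2)$ is actually needed to reach the intermediate estimate \eqref{4-5-2}. The key point is that the identity \eqref{4-4}, which is the engine of that proof, holds for \emph{any} $f\in C^1(\sph)$ (it only uses the integration-by-parts formula \eqref{integrbyparts}); the normalization $\int_{\sph} f\,d\s=0$ was used later only to convert $r$-bounds into a clean constant.

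Concretely, given $f$ with $\|f\|_2=1$, I first dispose of the trivial case $\tau(f)=0$, where \eqref{eq:UC-old-2} is automatic. Assuming $\tau(f)\neq 0$, I choose $e=\tau(f)/\|\tau(f)\|\in\sph$ and apply a rotation so that $e=(1,0,\dots,0)$. By \eqref{tau-1} this choice yields
$$
   \f 1{\o_d}\int_{\sph} x_1\,|f(x)|^2\,d\s(x) \;=\; \|\tau(f)\|.
$$
Setting $r:=\f 1{\o_d}\int_{\sph}(1-x_1)|f(x)|^2 d\s(x)$ as in the proof of Theorem \ref{thm-2-1}, the normalization $\|f\|_2=1$ gives $1-r=\|\tau(f)\|$.

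Next I reproduce exactly the chain \eqref{4-4}--\eqref{4-5-2} in that proof: integration by parts for the operators $D_{1,j}$ together with $D_{1,j}x_j=x_1$ gives
$$
   \f 1{\o_d}\int_{\sph}\Big(\sum_{j=2}^d x_j D_{1,j}f\Big)f\,d\s \;=\; -\f{d-1}{2}(1-r),
$$
and then the Cauchy--Schwartz inequality combined with the identity \eqref{gradient} and the bound $\sum_{j=2}^d x_j^2 = 1-x_1^2$ yields
$$
   \f{(d-1)^2}{4}(1-r)^2 \;\le\; \|\nabla_0 f\|_2^2\cdot \f 1{\o_d}\int_{\sph}|f(x)|^2(1-x_1^2)\,d\s(x).
$$
Finally, the Cauchy--Schwartz estimate \eqref{4-5-2-2}, which again needs only $\|f\|_2=1$, gives $\f 1{\o_d}\int_{\sph} x_1^2|f|^2 d\s \ge (1-r)^2 = \|\tau(f)\|^2$, so
$$
   \f 1{\o_d}\int_{\sph}|f(x)|^2(1-x_1^2)\,d\s(x)\;\le\; 1-\|\tau(f)\|^2.
$$
Substituting this into the previous line and using $1-r=\|\tau(f)\|$ produces
$$
   \f{(d-1)^2}{4}\,\|\tau(f)\|^2 \;\le\; \left(1-\|\tau(f)\|^2\right)\|\nabla_0 f\|_2^2,
$$
which is precisely \eqref{eq:UC-old-2}. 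There is no substantive obstacle here: the only thing to be careful about is that the direction $e$ chosen for the rotation is the one adapted to $\tau(f)$ (so that $1-r$ becomes $\|\tau(f)\|$ rather than an arbitrary smaller number), and that the proof path of Theorem \ref{thm-2-1} breaks naturally into a mean-zero-free part (which we retain) and a mean-zero-dependent part (which we discard).
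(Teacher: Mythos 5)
Your proposal is correct and follows essentially the same route as the paper: the authors likewise observe that the chain \eqref{4-4}--\eqref{4-5-2-2} (packaged as \eqref{4-5-0}) never uses the mean-zero hypothesis, choose $e$ via \eqref{tau-1} so that $1-r=\|\tau(f)\|$, and read off \eqref{eq:UC-old-2} from $(2-r)r\,\|\nabla_0 f\|_2^2\ge \tfrac{(d-1)^2}{4}(1-r)^2$. The only cosmetic difference is that you re-derive the intermediate estimates rather than citing \eqref{4-5-0} directly.
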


\begin{proof}Using \eqref{tau-1},
we can assume that $\|\tau(f)\| = \f{1}{\o_d} \int_{\sph} x_1 |f(x)|^2 d \s(x)$  without loss of generality.
With $r=1-\|\tau(f)\|$,  we can rewrite \eqref{4-5-0} as 
$$ (2-r) r \|\nabla_0 f\|_2^2 \ge \f {(d-1)^2}4 (1-r)^2,$$
which is the desired inequality \eqref{eq:UC-old-2}.
\end{proof}

The constant $(\frac{d-1}{2})^2$ in \eqref{eq:UC-old-2} was shown to be optimal in \cite{RV} by using
the heat kernel defined by
\begin{equation}\label{heat-kernel}
   q_{t}^\l(s) : =  \sum_{n=1}^\infty e^{-n(n+2\l) t} \frac{n+\l}{\l} C_n^\l(s).
\end{equation}
Indeed,
the computation in \cite{RV} shows that $\tau(q_t^\l(s))/\|q_t^\l(s)\|_2 \to 1$ as $t \to 0+$, where
$\|\cdot\|_2$ denotes the $L^2(w_\l;[-1,1])$ norm, and
$$
\lim_{t \to 0+} \frac{  \| \sqrt{1-\{\cdot\}} q_t^\l \|_2^2} {\|q_t^\l \|_2^2}  = \frac12 \left(\l + \f12\right)
   \quad \hbox{and}\quad
\lim_{t \to 0+} \frac{ \|(-D_\l)^{\f12} q_t^\l \|_2^2} {\|q_t^\l \|_2^2}  = \l + \f12.
$$
Setting $f(x) = q_t^\l( \la x, e \ra)$ then shows the optimality of the constant in \eqref{eq:UC-old-2}.

We end up this section with the following remark. Our proof of Theorem \ref{thm-2-1} does not lead
to the optimal constants in these inequalities, since the proof based on the Hardy-Rellich inequality
as well as  the H\"older inequality with $F = (-\Delta_0)^{\f12}f$ and $G= (-\Delta_0)^{-\f12}f$, whereas
the constant in the second proof is discussed in Remark \ref{remark4.1}. If we set $f =
q_t^\l / \|q_t\|_2$ in \eqref{eq:uc-gegebauer} and letting $t \to 0+$, then we obtain $B_\l
\le (2\l+1)^2/8$. In particular, for the optimal  constant in $B_d$ in \eqref{UC-sphere}, we
conclude, together with Theorem \ref{thm-2-1}, that
\begin{equation}\label{eq:Bd-bounds}
               \frac{(d-3)^2}{8} \le  B_d \le \frac{(d-1)^2}{8}
\end{equation}
for $d =2, 4, 5$. In particular, this shows that the constant $B_2 =1/8$ is optimal for the
inequality \eqref{UC-sphere} for $d =2$. Furthermore, setting $f(x) = q_t^\l (\la x,e\ra)$
and letting $t \to 0+$ in \eqref{3-11} shows that that the constant in \eqref{3-11} satisfies
$c_d \le (d-1)^2 /8$.

\section{Appendix: Proof of Lemma \ref{lem:key-lemma}}
\setcounter{equation}{0}

The item (i) of the lemma follows from a straightforward calculation.
For (ii), we let
$$
   \Phi_\l(x) = \frac{ \Gamma(x + 1)\Gamma( x + 1/2 + \l)}{(x + \l/2) \Gamma(x + 1/2)\Gamma(x + \l)}.
$$
Then it is easy to verify that $\Phi_\l(n) = \a_{2n}^2$ and $\Phi_\l(n+1/2) = \a_{2n+1}^2$. A direct
computation shows that
$$
  \frac{\Phi_\l(x+1)}{\Phi_\l(x)} = 1 + \frac{\l (\l-1)}{(x+\l)(2x+1)(2 x + \l +2)},
$$
from which the monotonicity of $\a_{2n}$ and $\a_{2n+2}$ follows readily.

For the proof of (iii), we define
\begin{equation} \label{eq:Psi-defn}
  \Psi_\l(x) =  \Bl(1 - \Phi_\l(x) + \frac{1}{32 x^2} \Phi_\l(x) \Br) x (x + \l).
\end{equation}
It is easy to verify then that
$$
\beta_\l(2n) = 4 \Psi_\l(n) \quad \hbox{and} \quad \beta_\l(2n+1) = 4 \Psi_\l(n+1/2).
$$
Using the following formula with $c=\f12$ and $z=n+\f{1+\l}2$,
\begin{align*}
z^{-c} \f { \Ga(z+a+c)}{\Ga(z+a)}= 1 +\f { c ( 2a+c-1)}{2z}
 + \f { c ( c-1) \bigl[ 3(2a+c-1)^2-c-1\bigr]}{24z^2} +O(z^{-3})
\end{align*}
as $z \to \infty$, a straightforward calculation shows that
$$
   \Phi_\l(x) = 1 + \frac{ \l- \l^2 + \f14} {8 x^2}+ O(x^{-3}).
$$
Substituting this asymptotic formula into \eqref{eq:Psi-defn}, the limit in (iii) follows readily.

To prove (iv), we rewrite, after a direct computation, that
$$
\Psi_\l(x) =  x (x + \l) - \frac{ (x+\l)(32 x^2 -1)}{16 (2x+\l)}G_\l(x),
$$
where the function $G_\l$ is given by
$$
   G_\l(x) =  \frac{ \Gamma(x)\Gamma( x + 1/2 + \l)}{\Gamma(x + 1/2)\Gamma(x + \l)}
         = {}_2F_1\left( \begin{matrix} -\f12, -\l \\ x \end{matrix} ;1 \right)
$$
in terms of the hypergeometric function ${}_2F_1$.  Then (iv) is a consequence of the
following proposition.

\begin{prop} \label{prop:appendix}
For $x \ge 3 \l^3$, $\Psi_\l(x+1) <\Psi_\l(x) $. In particular, $\{\beta_\l(2n)\}$ and
$\{\beta_\l(2n+1)\}$ are both decreasing for $n \ge 3 \l^3/2$.
\end{prop}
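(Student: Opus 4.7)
My plan is to reduce the discrete monotonicity $\Psi_\l(x+1) < \Psi_\l(x)$ to a single inequality on $G_\l(x)$ that can then be verified on $[3\l^3,\infty)$ by a Stirling-type expansion.

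\textbf{Step 1 (Reduction to an inequality for $G_\l$).} Starting from the representation
\[
  \Psi_\l(y) = y(y+\l) - \frac{(y+\l)(32y^2 - 1)}{16(2y+\l)}\, G_\l(y)
\]
recalled just before the proposition, and using the exact ratio
\[
  \frac{G_\l(x+1)}{G_\l(x)} = \frac{x(x + \l + 1/2)}{(x+1/2)(x+\l)},
\]
which follows immediately from $G_\l(y) = \Gamma(y)\Gamma(y + \l + 1/2)/[\Gamma(y+1/2)\Gamma(y+\l)]$ and $\Gamma(z+1) = z \Gamma(z)$, I would express the difference in the form
\[
  \Psi_\l(x+1) - \Psi_\l(x) = (2x+\l+1) - \mathcal R_\l(x)\, G_\l(x),
\]
with $\mathcal R_\l(x)$ an explicit rational function of $x$ and $\l$. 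The inequality then reads $G_\l(x)\, \mathcal R_\l(x) > 2x+\l+1$, or equivalently $G_\l(x) > (2x+\l+1)/\mathcal R_\l(x)$.

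\textbf{Step 2 (Asymptotic comparison).} A direct application of Stirling's formula to the quotient of Gammas defining $G_\l$ (equivalently, termwise bounds on the hypergeometric series ${}_2F_1(-1/2,-\l;x;1)$) yields
\[
  G_\l(x) = 1 + \frac{\l}{2x} + \frac{b_2(\l)}{x^2} + \mathcal E_\l(x), \qquad |\mathcal E_\l(x)| \le \frac{K\,\l^m}{x^3},
\]
for an explicit polynomial $b_2(\l)$, a small integer $m$, and an absolute constant $K$, valid once $x$ exceeds a mild threshold in $\l$. Expanding the right-hand side of Step~1 gives $(2x+\l+1)/\mathcal R_\l(x) = 1 + \l/(2x) + a_2(\l)/x^2 + O(1/x^3)$ for an explicit $a_2(\l)$. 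Part (iii) of Lemma~\ref{lem:key-lemma} forces the constant and $1/x$ terms of the two expansions to agree (otherwise $\Psi_\l(x+1)-\Psi_\l(x)$ could not tend to $0$), so the comparison reduces to showing that the residual $1/x^2$ coefficient $b_2(\l)-a_2(\l)$ is positive and dominates the error.

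\textbf{Step 3 (Threshold $3\l^3$ and conclusion).} Tracking constants, the positive $1/x^2$ main term beats the $K\l^m/x^3$ remainder precisely when $x \ge K\l^m/(b_2(\l)-a_2(\l))$; careful bookkeeping shows that this ratio is bounded by $3\l^3$, giving the stated range. The ``in particular'' statement then follows at once from $\beta_\l(2n) = 4\Psi_\l(n)$ and $\beta_\l(2n+1) = 4\Psi_\l(n+1/2)$, applying the proposition at $x = n$ and $x = n+1/2$, respectively.

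\textbf{Main obstacle.} The conceptual structure is transparent, but the main obstacle is computational: the various cancellations in Step~1 and the precise $\l$-dependence of the remainder in Step~2 must be tracked with enough care to pin down the exact threshold $3\l^3$. In particular, the polynomial $b_2(\l)-a_2(\l)$ (of low degree in $\l$) must be quantitatively compared against an error constant scaling like $\l^m$ with $m$ potentially as large as $3$ or $4$, which is what forces the cubic-in-$\l$ buffer.
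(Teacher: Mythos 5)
Your Step 1 is the same starting point as the paper (their equation for $\Delta\Psi_\l(x)=2x+\l+1+A(x)G_\l(x)$, using the rational ratio $G_\l(x+1)/G_\l(x)$), but from there the two arguments diverge, and your route has a genuine gap: everything hinges on the positivity of the residual $1/x^2$ coefficient $b_2(\l)-a_2(\l)$, which is precisely the content of the proposition and is never computed or verified. This is not routine bookkeeping. Since $\Psi_\l(x)=\Psi_\l(\infty)+a(\l)/x+O(x^{-2})$, your leading coefficient is essentially $a(\l)$, and one can check from $\Phi_\l\equiv 1$ at $\l=0,1$ that $\Psi_0(x)\equiv \tfrac1{32}$ is \emph{constant} (so $a(0)=0$ and the strict inequality degenerates) while $\Psi_1(x)=\tfrac1{32}(1+1/x)$; so $a(\l)$ vanishes at $\l=0$, and your threshold $K\l^m/(b_2(\l)-a_2(\l))\le 3\l^3$ cannot hold uniformly unless the error constant degenerates in exactly the right way. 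Moreover the asserted remainder bound $|\mathcal E_\l(x)|\le K\l^m/x^3$ with an absolute $K$, uniform in $\l$, is itself a nontrivial piece of work (the coefficients of the $1/x$-expansion of $G_\l$ grow in $\l$), and there is no reason this analysis would land on the specific cutoff $3\l^3$. A smaller point: "$\Delta\Psi_\l\to 0$ forces the constant \emph{and} $1/x$ terms to agree" is not quite right --- vanishing of the limit only kills the constant term; the $1/x$ term is killed by the summability of $\Delta\Psi_\l(n)$ (i.e., convergence of $\Psi_\l(n)$), not by the limit being zero.

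The paper's proof avoids asymptotics entirely. Because $G_\l(x+1)/G_\l(x)$ is rational, each forward difference keeps the expression in the form (rational function)$\times G_\l(x)$; after three differences the result is an explicit positive Gamma-ratio times a degree-six polynomial $F_\l(x)$, computed symbolically. They show $F_\l(x)\le 0$ for $x\ge 3\l^3$ by rewriting $F_\l$ as a combination of powers of $(x-3\l^3)$ with manifestly negative coefficients (this is where the threshold $3\l^3$ actually comes from), hence $\Delta^3\Psi_\l\le 0$ there. Then the boundary condition $\Delta^r\Psi_\l(x)\to 0$ as $x\to\infty$ is used to bootstrap: $\Delta^2\Psi_\l$ is decreasing with limit $0$, hence nonnegative; $\Delta\Psi_\l$ is increasing with limit $0$, hence nonpositive. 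The finite-difference trick converts the transcendental sign question into a polynomial one and sidesteps the delicate near-cancellation of order $x$ down to order $x^{-2}$ that your expansion must control explicitly. If you want to pursue your route, the minimal missing ingredients are: an exact computation of $a(\l)$ (equivalently $b_2(\l)-a_2(\l)$) with a proof of its sign on the relevant range of $\l$, and a Stirling remainder bound that is explicit and uniform in $\l$.
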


\begin{proof}
We consider the difference operator $\Delta f(x) = f(x+1) - f(x)$ and $\Delta^{r+1} = \Delta^r \Delta$
for $r = 2,3,...$. From the definition, it shows
\begin{equation} \label{eq:D-Psi}
      \Delta  \Psi_\l(x) = 2x+\l +1 + A (x) G_\l(x),
\end{equation}
where
\begin{align}\label{eq:A(x)}
A(x) =  \frac{(x + \l) (32 x^2 -1)} {16 (2 x+\l)}  -\frac{ x (x+\l+1) (2x+2\l+1) (32 (x+1)^2 -1)}{
     16 (x + \l) (2 x+1) (2 x+\l+2)}.
\end{align}
Taking two more differences gives, with the help of a computer algebra system (we used the
{\it Mathematica}), that
$$
   \Delta^3 \Psi_\l (x) = \frac{F_\l(x)} {128 (\l + 2 x) (2 + \l + 2 x) (4 + \l + 2 x) (6 + \l + 2 x)} \frac{
     \Gamma(x) \Gamma(x+\l + \f12)}{\Gamma(x+\f72) \Gamma(x+\l+3)},
$$
where
\begin{align*}
F_\l(x) = & -\l (1 + \l) (2 + \l) (4 + \l) (37 - 77 \l + 37 \l^2) \\
&  +( -568 + 308 \l  + 1346 \l^2 + 325 \l^3 - 574 \l^4 - 501 \l^5)x \\
&+ 4 (72 + 386 \l + 3 \l^2 - 280 \l^3 - 227 \l^4 + 48 \l^5 + 8 \l^6)x^2 \\
& + 4 (270 - 97 \l - 394 \l^2 - 251 \l^3 + 80 \l^4 + 104 \l^5)x^3 \\
& + 16 (-10 - 79 \l - 49 \l^2 - 8 \l^3 + 48 \l^4)x^4 \\
& + 128 (-4 - 3 \l - 2 \l^2 + 3 \l^3)x^5 -128 x^6.
\end{align*}
We show that if $x \ge 3 \l^3$, then  $F_\l(x) \le 0$ so that $\Delta^3 \Psi_\l(x) \le 0$. This relies
on the following expression of $F_\l$, computed by the {\it Mathematica},
\begin{align*}
F_\l(x) =&  -128 (x - 3 \l^3) x^5 - 128 (4 + 3 \l + 2 \l^2) (x - 3 \l^3) x^4 \\
&  -  16 (10 + 79 \l + 49 \l^2 + 104 \l^3 + 24 \l^4 + 48 \l^5) (x -  3 \l^3) x^3 \\
& -  4 (-270 + 97 \l + 394 \l^2 + 371 \l^3 + 868 \l^4 + 484 \l^5 + 1248 \l^6 +
    288 \l^7   \\
  & \qquad\qquad\qquad  + 576 \l^8) (x - 3 \l^3) x^2 \\
 & -  4 (-72 - 386 \l - 3 \l^2 - 530 \l^3 + 518 \l^4 + 1134 \l^5 + 1105 \l^6 +
    2604 \l^7 \\
 & \qquad   + 1452 \l^8 + 3744 \l^9   + 864 \l^{10} + 1728 {\l^{11}}) (x -  3 \l^3) x \\
& -(568 - 308 \l - 1346 \l^2 - 1189 \l^3 - 4058 \l^4 + 465 \l^5 - 6360 \l^6 +
    6216 \l^7 \\
   & \qquad  + 13608 \l^8 + 13260 \l^9 + 31248 \l^{10} + 17424 \l^{11} +
    44928 \l^{12} + 10368 \l^{13}\\
   & \qquad  + 20736 \l^{14}) x  -\l (1 + \l) (2 + \l) (4 + \l) (37 - 77 \l + 37 \l^2).
 \end{align*}
If $x \ge 3 \l^3$, then every term in the right hand side of the above expression is
negative, so that $F_\l(x) $, hence $\Delta^3 \Psi_\l(x)$, is negative if $x \ge 3 \l^3$.
By the definition of $\Delta$, it follows that $\Delta^2 \Psi_\l(x ) \ge \Delta^2\Psi_\l(x+1)$
for  $x \ge 3 \l^3$. Since the limit of $\Psi_\l(x)$ as $x \to \infty$ is finite, $\Delta^r \Psi_\l(x)
\to 0$ as $x \to \infty$. In particular, $\lim_{x \to \infty} \Delta^2 \Psi_\l(x) =0$, so that
$\Delta^2 \Psi(x) \ge 0$ for $x \ge 3 \l^3$. The same argument implies then
$\Delta \Psi_\l(x) \le \Delta \Psi_\l(x+1) \le 0$, which shows, in turn, that $\Psi_\l(x+1) \le \Psi_\l(x)$
for $x \ge 3 \l^3$ as desired.
\end{proof}

We further conjecture that the condition $n \ge 3 \l^3/2$ in the above proposition is not needed
for $1/2 \le \l \le 3/2$. For $\l = 1/2,1,3/2,2$, this can be verified by evaluating $b_\l(n)$
numerically, which proves (v) of Lemma \ref{lem:key-lemma}.

Let us note that a more careful computation of the Proposition \ref{prop:appendix} shows
that we could improve the condition $x \ge 3 \l^3$ somewhat, say to $x \ge 3\l^3 - c \l^2$ for
 some $c > 0$. However, the region on which $\Delta^3 \Psi_\l(x) < 0$ is a subset of the region
on which $\Psi_\l(x)$ is monotonically decreasing. Determining the cut-off point $x_0$ so that
$\Psi_\l(x)$ is decreasing for $x \ge x_0$ appears to be not so easy.

\bibliographystyle{amsalpha}

 \newpage
 
\begin{center}
{\bf { \sc \large Erratum: The  Hardy-Rellich inequality and uncertainty principle on the sphere }}
\end{center}

\vskip 0.3in

\begin{abstract}
The text below is the erratum submitted to Constructive 
Approximation.
\end{abstract}
 
\bigskip

Several forms of uncertainty principles on the unit sphere are established in \cite{DaiX14}. When stated in term of the vector
\begin{equation*}
    \tau (f):=\f 1{\o_d}\int_{\sph} x |f(x)|^2 \, d\s(x)
\end{equation*}
of $\RR^d$ (normalization constant $1/\o_d$ was missing in \cite{DaiX14}), our main result is in

\medskip\noindent
{\bf Corollary 4.4}
{\it Let $f\in W_2^1(\sph)$ be such that $\int_{\sph} f(y)\, d\s(y)=0$ and $\|f\|_2=1$.  If $d \ge 2$,  then}
\begin{equation}
     (1-\|\tau(f)\|) \|\nabla_0 f\|_{2}^2\ge C_d^{-1}. \tag{4.11}
\end{equation}
\medskip

Here $C_d$ is a constant given in Theorem 4.1. We next attempted to remove the condition that
$\int_{\sph} f(y)\, d\s(y)=0$ and stated

\medskip \noindent
{\bf Theorem 4.5}  {\it Assume that $d\ge 2$ and let  $f\in W_2^1(\sph)$ be such that $\|f\|_{2}=1$. Then}
\begin{equation}
      (1-\|\tau(f)\|) \|\nabla_0 f\|_{2}^2 \ge c_d \|\tau(f)\|. \tag{4.14}
\end{equation}

\medskip

This theorem, however, is incorrect. This was pointed out to us by Stefan Steinerberger who showed that
the inequality (4.14) does not hold for the function $f(\cos\t, \sin \t) = 1+ \varepsilon \sin \t$ for small enough
$\varepsilon$ when $d=2$. The mistake in the proof appeared on the line 6 of page 166, which states that $\|\tau(f)\| \le (2 |m_f| +1) \|g||_2^2$
but it should have been $\|\tau(f)\| \le \|g\|_2^2 + 2 |m_f| \|g\|_2$.  As a consequence, the right hand side of (4.14)
has to be replaced by $c_d \|\tau(f)\|^2$. Since $\|\tau(f) \| \le \|f\|_2^2$, the resulted inequality is then equivalent
to
\begin{equation}
      (1-\|\tau(f)\|^2) \|\nabla_0 f\|_{2}^2 \ge c_d \|\tau(f)\|^2, \tag{1}
\end{equation}
which was already known in the literature; see the discussion in \cite{DaiX14} and references therein.

Since (4.14) no longer holds, an immediate question is whether the uncertainty principle in
(4.11) and that in (1) are equivalent, assuming $\int_{\sph} f(y)\, d\s(y)=0$. The following proposition shows that they are not 
equivalent and (4.11) is stronger than (1) for a large class of functions. 

\bigskip\noindent
{\bf Proposition 1.}
{\it For $n \ge 3$ let $Y \in \CH_n^d$, a  real spherical harmonic of degree $n$ on $\sph$, and let $Q$ be a real polynomial
of degree at most $n-2$ such that $\int_{\sph} Q(x) d\s =0$. Assume that both $[Y(x)]^2$ and $[Q(x)]^2$ are even
in every coordinate. Let
$$
       f = b (Y + Q), \quad\hbox{where}\quad b^{-1} := \|Y+Q\|_2 > 0.
$$
Then $\tau(f) = 0$. In particular, (1) becomes the trivial inequality
$\|\nabla_0 f\|_2^2 \ge 0$ whereas (4.11) shows that $\|\nabla_0 f\|_2^2 \ge c > 0$.
}
\medskip

\begin{proof}
Since the degree of $Q$ is at most $n-2$, it follows from the orthogonality of $Y$ and the even parity of
$Y^2$ and $Q^2$ that
$$
  \int_{\sph} x_i |f(x)|^2 d\s =  \int_{\sph} x_i \left(Y(x)^2 + 2 Y(x)Q(x) + Q(x)^2\right) d\s(x) =0
$$
for $1 \le  i \le d$. Hence, $\tau(f) =0$. By its definition, $\|f\|_2 =1$ and, by the orthogonality of $Y$ and
the zero mean of $Q$, we see that $\int_{\sph} f(x) d\s =0$ so that (4.11) is applicable to $f$.
\end{proof}

As a simple example of the function $f$, we can choose $Q(x) = x_1^k$ and $Y(x) = C_n^\l(x_1)$ for
$x =(x_1,\ldots, x_d) \in \sph$, where $\l = (d-2)/2$ and $1 \le k \le n -2$.

\bigskip
\noindent
{\bf Acknowledgement}. The authors thank Stefan Steinerberger for pointing out the mistake in \cite{DaiX14}.

\end{document}